\documentclass{article}

\usepackage{lineno,hyperref}
\usepackage{amsmath}
\usepackage{amssymb}
\usepackage{amsthm}
\modulolinenumbers[5]
\usepackage{enumitem}
\usepackage[all]{xy}
\SelectTips{cm}{12}

\newtheorem{lemma}{Lemma}
\newtheorem*{lemma*}{Lemma}

\newtheorem{theorem}{Theorem}
\newtheorem*{theorem*}{Theorem}

\theoremstyle{definition}

\newcommand\dotminus{\mathbin{\dot{-}}}

\newcommand\dotoplus{\mathbin{\dot{\oplus}}}

\DeclareMathOperator\mat{M}

\DeclareMathOperator\Cent{C}

\DeclareMathOperator\Ker{Ker}

\newcommand\leqt{\trianglelefteq}

\DeclareMathOperator\Spec{Spec}

\newcommand{\up}[2]{{^{#1}\!{#2}}}

\title{
	The Diophantine problem \\
	in isotropic reductive groups
}

\author{
	Egor Voronetsky\thanks{
		This work was performed at the Saint Petersburg Leonhard Euler International Mathematical Institute and supported by the Ministry of Science and Higher Education of the Russian Federation (agreement no. 075--15--2025--343).
	} \\
	Saint Petersburg University,\\
	7/9 Universitetskaya nab.,\\
	St. Petersburg, 199034 Russia
}

\begin{document}
\maketitle

\begin{abstract}
	We begin to study model-theoretic properties of non-split isotropic reductive group schemes. In this paper we show that the base ring
	\( K \) is e-interpretable in the point group
	\( G(K) \) of every sufficiently isotropic reductive group scheme
	\( G \). In particular, the Diophantine problems in
	\( K \) and
	\( G(K) \) are equivalent. We also compute the centralizer of the elementary subgroup of
	\( G(K) \) and the common normalizer of all its root subgroups.
\end{abstract}

\section{Introduction}

Let
\( M \) be a model of some first-order language
\( L \) with equality, i.e.
\( M \) is a set with interpretation of constant symbols, functional symbols, and predicate symbols from
\( L \). For example,
\( M \) may be a group with constant
\( 1 \), operations of multiplication and inversion, and the equality predicate. Recall that an
\textit{elementary formula} with the variables from
\( \vec x = (x_1, \ldots, x_n) \) is a formula in the language
\( L \) consisting of a single predicate symbol with substituted terms involving only variables from
\( \vec x \) and constants from
\( L \). A formula
\[
	\exists \vec y \enskip \bigl(
		P_1(\vec x, \vec y)
		\wedge
		\ldots
		\wedge
		P_n(\vec x, \vec y)
	\bigr)
\]
for elementary formulae
\( P_i(\vec x, \vec y) \) is called
\textit{Diophantine} (also
\textit{positive-primitive} or
\textit{regular}) in the variables
\( \vec x \). Finally, a subset
\( X \subseteq M^n \) is called
\textit{Diophantine} with respect to parameters
\( \vec a \in M^m \) if
\[
	X
	=
	\{
		\vec x \in M^n
		\mid
		\varphi(\vec x, \vec a)
	\},
\]
where
\( \varphi \) is a Diophantine formula in the variables
\( \vec x \) and
\( \vec a \).

A closely related is the notion of
\textit{e-interpretation}. An e-interpretation of a model
\( M \) in a model
\( N \) (of possibly another language) is a surjection
\( X \to M \) for some Diophantine
\( X \subseteq N^n \) such that the lifts of all relations on
\( M \) to
\( X \) from its language are Diophantine (including the equivalence relation lifting the equality), as well as lifts of all graphs of operations on
\( M \). For example, if
\( K \) is a commutative ring and
\( G \) is a finitely presented affine group scheme over
\( K \), then the group
\( G(K) \) is e-interpretable in the ring
\( K \).

Suppose that
\( M \) and
\( L \) are enumerated by
\( \mathbb N \) or its finite subsets. The
\textit{Diophantine problem}
\( \mathcal D(M) \) is
\textit{decidable} if the non-emptiness of generic Diophantine set
\(
	\{
		\vec x
		\mid
		\varphi(\vec x, \vec a)
	\}
\) may be checked by an algorithm knowing only the codes of
\( \varphi \) and
\( \vec a \) (note that the set of Diophantine formulae is decidable). For example, the Diophantine problem
\( \mathcal D(\mathbb Z) \) (for the language of rings and a natural enumeration) is undecidable by famous Matiyasevich's theorem. On the other hand,
\( \mathcal D(\mathbb Q^{\mathrm{alg}}) \) is decidable (again for the language of rings and a natural enumeration) because every algebraically closed field admits quantifier elimination.

Now let
\( K \) be a commutative ring and
\( G \) be a reductive group scheme over
\( K \) in the sense of
\cite{sga3}. If
\( G \) is splits, i.e. it is a Chevalley--Demazure group scheme, simple, and of rank at least
\( 2 \), then by the main result of Elena Bunina, Alexey Myasnikov, and Eugene Plotkin
\cite{dioph-chev} the ring
\( K \) and the group
\( G(K) \) are e-interpretable in each other and the Diophantine problems
\( \mathcal D(K) \) and
\( \mathcal D(G(K)) \) are equivalent. More precisely, if
\( K \) is countable with fixed enumeration, then the Diophantine problems are reducible to each other by explicit algorithms. See also
\cite{%
	avn-lub-mei,%
	avn-mei,%
	bei-mik,%
	belegradek,%
	bun-uni,%
	bun-che-fie,%
	bun-che-loc,%
	bun-bii,%
	bun-mik,%
	kra-roh-ten,%
	maltsev,%
	mya-soh-tri,%
	mya-soh-lin,%
	mya-soh-cla,%
	segal-tent%
} for other results on model theory of linear groups over rings, including e-interpretability and first order interpretability.

We are going to generalize this result to isotropic
\( G \), not necessarily split. There are several possible definitions of such group schemes, for example, in Victor Petrov and Anastasia Stavrova's paper
\cite{iso-ele-nor} the term
\textit{isotropic} means that there exists a strictly proper parabolic subgroup. There is also a more general notion of
\textit{locally isotropic} reductive group schemes
\cite{loc-iso-ele}, namely, that the isotropicity condition holds locally in the Zariski topology. In this paper we impose an even stronger condition than Petrov and Stavrova, but for local rings all these definitions coincide. We plan to cover the locally isotropic case in a sequel paper using localization technique from
\cite{loc-iso-ele}. Of course, we actually need not only that
\( G \) is isotropic, but also that its suitable ``isotropic rank'' is at least
\( 2 \).

We follow the general strategy of Bunina -- Myasnikov -- Plotkin. Namely, root subgroups of
\( G(K) \) turn out to be Diophantine, see theorem
\ref{diophantine} below. This allows us to construct e-interpretation of the base ring
\( K \) in the group
\( G(K) \), this is theorem
\ref{e-interpret}. Finally, since
\( K \) and
\( G(K) \) e-interpret each other, their Diophantine problems are equivalent (theorem
\ref{dioph-prbl}). As an application of preparatory technical results we also find the centralizer and the normalizer of all root subgroups (and even their finite subsets) together, this is a generalization of
\cite{center-che, center-iso}.

For example, it follows that the Diophantine problem for
\( \mathrm{SO}(E_8 \perp H \perp H) \) is unsolvable, where
\( E_8 \) is the lattice spanned by the root system of type
\( \mathsf E_8 \) and
\( H \) is a hyperbolic plane over
\( \mathbb Z \).

The author wants to thank Eugene Plotkin for the problem statement and great motivation.

\section{Isotropic reductive groups}

In this paper all root systems are crystallographic, but not necessarily reduced, i.e. we allow components of type
\( \mathsf{BC}_\ell \). Let
\( K \) be a non-zero unital commutative ring and
\( \widetilde \Phi \) be a reduced root system. Choose a weight lattice
\[
	\textstyle
	\mathbb Z \widetilde \Phi
	\leq
	\Lambda
	\leq
	\bigl\{
		x
		\mid
		\forall \alpha \in \widetilde \Phi \enskip
			2 \frac{x \cdot \alpha}{\alpha \cdot \alpha}
			\in
			\mathbb Z
	\bigr\}
\]
and consider the Chevalley -- Demazure group scheme
\( \mathrm G^\Lambda(\widetilde \Phi, {-}) \) over
\( K \). We denote its standard maximal torus by
\( \widetilde T \) and its standard root subgroups by
\( \widetilde U_\alpha \) for
\( \alpha \in \widetilde \Phi \). Now choose
\(
	u
	\colon
	\widetilde \Phi \sqcup \{ 0 \}
	\to
	\Phi \sqcup \{ 0 \}
\) a surjective map arising from one of irreducible Tits indices (see below) and
\[
	L
	=
	\bigl\langle
		\widetilde T, \widetilde U_\alpha
		\mid
		u(\alpha) = 0
	\bigr\rangle,
	\quad
	U_\alpha
	=
	\bigl\langle
		\widetilde U_{\alpha'}
		\mid
		u(\alpha') \in \{ \alpha, 2 \alpha \}
	\bigr\rangle
\]
be subgroup sheaves of
\( \mathrm G^\Lambda(\widetilde \Phi, {-}) \), they are closed subgroups. More precisely,
\( L \) is split reductive with a maximal torus
\( \widetilde T \) and
\( U_\alpha \) are unipotent in a suitable sense.

In this paper we say that a group scheme
\( G \) together with a family of subgroup schemes
\( L, U_\alpha \leq G \) for
\( \alpha \in \Phi \) is an
\textit{isotropic reductive group scheme} if fppf locally it is isomorphic to
\( \mathrm G^\Lambda(\widetilde \Phi, {-}) \) with the standard subgroups constructed by a map
\(
	u
	\colon
	\widetilde \Phi \sqcup \{ 0 \}
	\to
	\Phi \sqcup \{ 0 \}
\). We also impose an additional condition that
\( G \) has
\textit{Weyl elements} locally in Zariski topology in the sense that locally there are
\(
	w_\alpha
	\in
	U_\alpha(K)\, U_{-\alpha}(K)\, U_\alpha(K)
\) such that
\(
	\up{w_\alpha}{U_\beta}
	=
	U_{
		\beta
		-
		2
		\frac {\alpha \cdot \beta}{\alpha \cdot \alpha}
		\alpha
	}
\) as group subschemes (we conjecture that this condition always holds). An isotropic group scheme
\( G \)
\textit{splits} if it is isomorphic to
\( \mathrm G^\Lambda(\widetilde \Phi, {-}) \) with the standard subgroups.

This class of group schemes is interesting because every simple reductive group scheme
\( G \) over a local ring
\( K \) is anisotropic or isotropic in the above sense in a canonical way up to conjugation, where
\[
	L
	\rtimes
	\bigl\langle
		U_\alpha
		\mid
		\alpha \in \Phi^+
	\bigr\rangle
\]
is a minimal parabolic subgroup, see
\cite[XXVI]{sga3} and
\cite{index-loc} for details. Moreover, if
\( G \) is a locally isotropic simple reductive group scheme over arbitrary ring
\( K \) and
\( \mathfrak p \leqt K \) is a prime ideal, then the isotropic structure of
\( G_{\mathfrak p} \) is well defined on
\( G_s \) for sufficiently large
\( s \in K \setminus \mathfrak p \).

Recall
\cite{index-field} that an irreducible Tits index
\( (\widetilde \Phi, \Gamma, J) \) consists of a reduced irreducible root system
\( \widetilde \Phi \), a subgroup
\( \Gamma \) of its group of outer automorphisms (i.e. the automorphism group of its Dynkin diagram), and a
\( \Gamma \)-invariant subset
\( J \) of vertices of the Dynkin diagram from the explicit list
\cite[table II]{index-field} or
\cite[appendix]{index-loc}. Then
\( \Phi \) is the image of
\( \widetilde \Phi \) in the factor-space of the ambient vector space by
\( \Gamma \) and the span of basic roots not in
\( J \), the rank of
\( \Phi \) is the number of
\( \Gamma \)-orbits in
\( J \). An index is usually denoted as
\( \up g {\mathsf X_{n, r}^t} \), where
\( g = |\Gamma| \),
\( \mathsf X_n \) is the type of
\( \widetilde \Phi \),
\( r \) is the rank of
\( \Phi \), and
\( t \) is an additional parameter. We are interested only in isotropic Tits indices, i.e. with
\( r \neq 0 \). An
\textit{outer automorphism group} of an irreducible Tits index is the group of such automorphisms
\( f \) of the root system
\( \widetilde \Phi \) that
\( f \) preserves the basis and
\( u \circ f = u \).

Now we give the list of all isotropic irreducible Tits indices. It is divided in two parts,
\textit{classical} indices (forming infinite families) and finitely many
\textit{exceptional} ones. The numeration of simple roots in all Dynkin diagrams follows Bourbaki
\cite{bourbaki}. For classical root systems we use the conventions
\(
	\mathsf A_{-1}
	=
	\mathsf A_0
	=
	\mathsf B_0
	=
	\mathsf C_0
	=
	\mathsf{BC}_0
	=
	\mathsf D_0
	=
	\mathsf D_1
	=
	\varnothing
\) and
\( \mathsf D_2 = 2 \mathsf A_1 \), as well as the exceptional isomorphisms
\( \mathsf A_1 = \mathsf B_1 = \mathsf C_1 \),
\( \mathsf B_2 = \mathsf C_2 \),
\( \mathsf A_3 = \mathsf D_3 \).

\begin{itemize}

	\item
	\( \up 1 {\mathsf A_{n, r}^{(d)}} \), where
	\( d (r + 1) = n + 1 \),
	\( d \geq 1 \), and
	\( n \geq 1 \). Here
	\( \widetilde \Phi = \mathsf A_n \),
	\( J = \{ d, 2 d, \ldots, r d \} \), and
	\( \Phi = \mathsf A_r \). The kernel of
	\( u \) has the type
	\( (r + 1) A_{d - 1} \) and
	\( |u^{-1}(\alpha)| = d^2 \) for every root
	\( \alpha \). The outer automorphism group is trivial for
	\( r \geq 2 \) or
	\( \up 1 {\mathsf A_{1, 1}^{(1)}} \), it is cyclic of order
	\( 2 \) for
	\( r = 1 \) and
	\( d \geq 2 \).
	
	Such split isotropic structure on
	\( \mathbb{SL}_{n + 1} \) corresponds to the block decomposition by
	\( d \times d \) blocks,
	\( U_\alpha \) are the groups subschemes of matrices differing from
	\( 1 \) only in one non-diagonal block, and
	\( L \) is the group subscheme of block diagonal matrices.

	\item
	\( \up 2 {\mathsf A_{n, r}^{(d)}} \), where
	\( d \mid n + 1 \),
	\( 2 r d \leq n + 1 \),
	\( d \geq 1 \),
	\( r \geq 1 \), and
	\( n \geq 2 \). Here
	\( \widetilde \Phi = \mathsf A_n \),
	\[
		J
		=
		\{
			d,
			2 d,
			\ldots,
			r d
		\}
		\sqcup
		\{
			n + 1 - r d,
			n + 1 - (r - 1) d,
			\ldots,
			n + 1 - d
		\}
	\]
	for
	\( 2 r d \leq n \) and
	\( J = \{ d, 2 d, \ldots, n + 1 - d \} \) for
	\( 2 r d = n + 1 \),
	\( \Phi = \mathsf{BC}_r \) for
	\( 2 r d \leq n \) and
	\( \Phi = \mathsf C_r \) for
	\( 2 r d = n + 1 \). The kernel of
	\( u \) has the type,
	\(
		2 r \mathsf A_{d - 1}
		+
		\mathsf A_{n - 2 r d}
	\),
	\( |u^{-1}(\alpha)| = 2 d (n + 1 - 2 r d) \) for ultrashort
	\( \alpha \),
	\( |u^{-1}(\alpha)| = 2 d^2 \) for short
	\( \alpha \),
	\( |u^{-1}(\alpha)| = d^2 \) for long
	\( \alpha \). The outer automorphism group is cyclic of order
	\( 2 \).
	
	Such split isotropic structure on
	\( \mathbb{SL}_{n + 1} \) corresponds to the block decomposition by
	\( r \) strips of width
	\( d \) from each side and, possibly, the central strip of width
	\( n + 1 - 2 r d \), both vertical and horizontal. We number these strips by integers from
	\( - r \) to
	\( r \),
	\( 0 \) corresponds to the central strip. The group subscheme
	\( L \) consists of block-diagonal matrices. If
	\( \alpha \) is short, then
	\( U_\alpha \) consists of matrices differing from
	\( 1 \) only in two blocks with indices
	\( (i, j) \) and
	\( (- j, - i) \) for some
	\( 0 \neq |i| \neq |j| \neq 0 \). If
	\( \alpha \) is long, then
	\( U_\alpha \) consists of matrices differing from
	\( 1 \) only in the block with the indices
	\( (- i, i) \) for some
	\( i \neq 0 \). Finally, if
	\( \alpha \) is ultrashort, then
	\( U_\alpha \) consists of matrices differing from
	\( 1 \) only in the blocks with the indices
	\( (0, i) \),
	\( (- i, i) \), and
	\( (- i, 0) \) for some
	\( i \neq 0\).

	\item
	\( \mathsf B_{n, r} \), where
	\( 1 \leq r \leq n \). Here
	\( \widetilde \Phi = \mathsf B_n \),
	\( J = \{ 1, 2, \ldots, r \} \), and
	\( \Phi = \mathsf B_r \). The kernel of
	\( u \) had the type
	\( \mathsf B_{n - r} \),
	\( |u^{-1}(\alpha)| = 2 n + 1 - 2 r \) for short
	\( \alpha \), and
	\( |u^{-1}(\alpha)| = 1 \) for long
	\( \alpha \). The outer automorphism group is trivial.
	
	Such split isotropic structure on
	\( \mathbb{SO}_{2 n + 1} \) is induced from the block decomposition
	\( \up 2 {\mathsf A_{2 n + 1, r}^{(1)}} \) on the standard representation, i.e.
	\( \mathbb{SO}_{2 n + 1} \leq \mathbb{SL}_{2 n + 1} \) is the stabilizer of the quadratic form
	\[
		q(\vec x)
		=
		x_1 x_{2 n + 1}
		+
		x_2 x_{2 n}
		+
		\ldots
		+
		x_n x_{n + 1}
		+
		x_{n + 1}^2.
	\]

	\item
	\( \mathsf C_{n, r}^{(d)} \), where
	\( d = 2^k \mid 2 n \),
	\( r d \leq n \),
	\( d \geq 1 \),
	\( r \geq 1 \), and
	\( n = r \) in the case
	\( d = 1 \). Here
	\( \widetilde \Phi = \mathsf C_n \),
	\( J = \{ d, 2 d, \ldots, r d \} \),
	\( \Phi = \mathsf{BC}_r \) for
	\( r d \leq n - 1 \) and
	\( \Phi = \mathsf C_r \) for
	\( r d = n \). The kernel of
	\( u \) has the type
	\(
		u^{-1}(0)
		=
		r \mathsf A_{d - 1}
		+
		\mathsf C_{n - r d}
	\),
	\( |u^{-1}(\alpha)| = 2 d (n - r d) \) for ultrashort
	\( \alpha \),
	\( |u^{-1}(\alpha)| = d^2 \) for short
	\( \alpha \),
	\( |u^{-1}(\alpha)| = \frac {d (d + 1)} 2 \) for long
	\( \alpha \). The outer automorphism group is trivial.
	
	Such split isotropic structure on
	\( \mathbb{Sp}_{2 n} \) is induced from the block decomposition
	\( \up 2 {\mathsf A_{2 n, r}^{(1)}} \) on the standard representation, i.e.
	\( \mathbb{Sp}_{2 n} \leq \mathbb{GL}_{2 n} \) is the stabilizer of the symplectic form
	\[
		B(\vec x, \vec y)
		=
		x_1 y_{2 n}
		+
		x_2 y_{2 n - 1}
		+
		\ldots
		+
		x_n y_{n + 1}
		-
		x_{n + 1} y_n
		-
		x_{n + 2} y_{n - 1}
		-
		\ldots
		-
		x_{2 n} y_1.
	\]

	\item
	\( \up 1 {\mathsf D_{n, r}^{(d)}} \), where
	\( d = 2^k \mid 2 n \),
	\( r d \leq n \),
	\( r d \neq n - 1 \),
	\( d \geq 1 \),
	\( r \geq 1 \),
	\( n \geq 3 \). Here
	\( \widetilde \Phi = D_n \),
	\( J = \{ d, 2 d, \ldots, r d \} \),
	\( \Phi = \mathsf{BC}_r \) for
	\( r d \leq n - 2 \) and
	\( d \geq 2 \),
	\( \Phi = \mathsf B_r \) for
	\( r d \leq n - 2 \) and
	\( d = 1 \),
	\( \Phi = \mathsf C_r \) for
	\( r d = n \) and
	\( d \geq 2 \),
	\( \Phi = \mathsf D_r \) for
	\( r d = n \) and
	\( d = 1 \). The kernel of
	\( u \) has the type
	\(
		r \mathsf A_{d - 1}
		+
		\mathsf D_{n - r d}
	\),
	\( |u^{-1}(\alpha)| = 2 d (n - r d) \) for ultrashort
	\( \alpha \),
	\( |u^{-1}(\alpha)| = d^2 \) for short
	\( \alpha \),
	\( |u^{-1}(\alpha)| = \frac {d (d - 1)} 2 \) for long
	\( \alpha \) (if
	\( d = 1 \), then
	\( |u^{-1}(\alpha)| = 2 d (n - r d) \) for short
	\( \alpha \) and
	\( |u^{-1}(\alpha)| = d^2 \) for long
	\( \alpha \)). The outer automorphism group is trivial for
	\( r d = n \) except the cases
	\( \up 1 {\mathsf D_{4, 2}^{(2)}} \) and
	\( \up 1 {\mathsf D_{4, 1}^{(4)}} \); it is cyclic of order two for
	\( \up 1 {\mathsf D_{4, 2}^{(2)}} \),
	\( \up 1 {\mathsf D_{4, 1}^{(4)}} \), and for
	\( r d \leq n - 2 \) except the case
	\( \up 1 {\mathsf D_{4, 1}^{(2)}} \); it is the whole triality group of order
	\( 6 \) in the case
	\( \up 1 {\mathsf D_{4, 1}^{(2)}} \).
	
	Such split isotropic structure on
	\( \mathbb{SO}_{2 n} \) is induced from the block decomposition
	\( \up 2 {\mathsf A_{2 n, r}^{(1)}} \) on the standard representation, i.e.
	\( \mathbb{SO}_{2 n} \leq \mathbb{GL}_{2 n} \) is the stabilizer of the quadratic form
	\[
		q(\vec x)
		=
		x_1 x_{2 n + 1}
		+
		x_2 x_{2 n}
		+
		\ldots
		+
		x_n x_{n + 1}
	\]
	and the Dickson invariant (or the determinant if
	\( 2 \) is invertible).

	\item
	\( \up 2 {\mathsf D_{n, r}^{(d)}} \), where
	\( d = 2^k \mid 2 n \),
	\( r d \leq n - 1 \),
	\( d \geq 1 \),
	\( r \geq 1 \),
	\( n \geq 3 \). Here
	\( \widetilde \Phi = \mathsf D_n \),
	\( J = \{ d, 2 d, \ldots, r d\} \) for
	\( r d < n - 2 \) and
	\( J = \{ d, 2 d, \ldots, (r - 1) d, n - 1, n \} \) for
	\( r d = n - 1 \),
	\( \Phi = \mathsf{BC}_r \) for
	\( d \geq 2 \) and
	\( \Phi = \mathsf B_r \) for
	\( d = 1 \). The kernel of
	\( u \) has the type
	\(
		r \mathsf A_{d - 1}
		+
		\mathsf D_{n - r d}
	\),
	\( |u^{-1}(\alpha)| = 2 d (n - r d) \) for ultrashort
	\( \alpha \),
	\( |u^{-1}(\alpha)| = d^2 \) for short
	\( \alpha \),
	\( |u^{-1}(\alpha)| = \frac {d (d - 1)} 2 \) for long
	\( \alpha \) (if
	\( d = 1 \), then
	\( |u^{-1}(\alpha)| = 2 d (n - r d) \) for short
	\( \alpha \) an
	\( |u^{-1}(\alpha)| = d^2 \) for long
	\( \alpha \)). The outer automorphism group is cyclic of order
	\( 2 \) except the case
	\( \up 2 {\mathsf D_{4, 1}^{(2)}} \) and it is the triality group of order
	\( 6 \) in this exceptional case.
	
	Such split isotropic structure on
	\( \mathbb{SO}_{2 n} \) is induced from the block decomposition
	\( \up 2 {\mathsf A_{2 n, r}^{(1)}} \) on the standard representation as in the previous case, independently of whether
	\( d = 1 \) or not.

\end{itemize}

Actually, all classical isotropic reductive group schemes can be described in terms of involution algebras
\cite{twi-for}.

Exceptional isotropic irreducible Tits indices are given in the following table. Sizes and types of preimages are given by increasing the root length. The indices of labels in
\( J \) denote the lengths of the corresponding roots in
\( \Phi \), namely, \textbf{l}ong, \textbf{s}hort, and \textbf{u}ltra\-\textbf{s}hort ones. When working with these group schemes it is useful to use explicit root diagrams of all exceptional root systems
\cite{atlas}, such diagrams split into the subdiagram
\( u^{-1}(0) \) and blocks
\( u^{-1}(\alpha) \) connected by edges and outer automorphisms of the Tits index.

\begin{center}
	\begin{tabular}{|c|c|c|c|c|c|c|}
		\hline
		Tits index
		& \( \widetilde \Phi \)
		& \( J \)
		& \( \Phi \)
		& \( u^{-1}(0) \)
		& \( |u^{-1}(\alpha)| \)
		& \( |\mathrm{Out}| \)
		\\ \hline \hline
		\( \mathsf E_{7, 1}^{78} \)
		& \( \mathsf E_7 \)
		& \( \{ 7 \} \)
		& \( \mathsf A_1 \)
		& \( \mathsf E_6 \)
		& \( 27 \)
		& \( 1 \)
		\\ \hline
		\( \up 3 {\mathsf D_{4, 1}^9} \),
		\( \up 6 {\mathsf D_{4, 1}^9} \)
		& \( \mathsf D_4 \)
		& \( \{ 2 \} \)
		& \( \mathsf{BC}_1 \)
		& \( 3 \mathsf A_1 \)
		& \( 8 \),
		\( 1 \)
		& \( 6 \)
		\\ \hline
		\( \up 2 {\mathsf E_{6, 1}^{35}} \)
		& \( \mathsf E_6 \)
		& \( \{ 2 \} \)
		& \( \mathsf{BC}_1 \)
		& \( \mathsf A_5 \)
		& \( 20 \),
		\( 1 \)
		& \( 2 \)
		\\ \hline
		\( \mathsf E_{7, 1}^{66} \)
		& \( \mathsf E_7 \)
		& \( \{ 1 \} \)
		& \( \mathsf{BC}_1 \)
		& \( \mathsf D_6 \)
		& \( 32 \),
		\( 1 \)
		& \( 1 \)
		\\ \hline
		\( \mathsf E_{8, 1}^{133} \)
		& \( \mathsf E_8 \)
		& \( \{ 8 \} \)
		& \( \mathsf{BC}_1 \)
		& \( \mathsf E_7 \)
		& \( 56 \),
		\( 1 \)
		& \( 1 \)
		\\ \hline
		\( \mathsf F_{4, 1}^{21} \)
		& \( \mathsf F_4 \)
		& \( \{ 4 \} \)
		& \( \mathsf{BC}_1 \)
		& \( \mathsf B_3 \)
		& \( 8 \),
		\( 7 \)
		& \( 1 \)
		\\ \hline
		\( \up 2 {\mathsf E_{6, 1}^{29}} \)
		& \( \mathsf E_6 \)
		& \( \{ 1, 6 \} \)
		& \( \mathsf{BC}_1 \)
		& \( \mathsf D_4 \)
		& \( 16 \),
		\( 8 \)
		& \( 2 \)
		\\ \hline
		\( \mathsf E_{7, 1}^{48} \)
		& \( \mathsf E_7 \)
		& \( \{ 6 \} \)
		& \( \mathsf{BC}_1 \)
		& \( \mathsf A_1 + \mathsf D_5 \)
		& \( 32 \),
		\( 10 \)
		& \( 1 \)
		\\ \hline
		\( \mathsf E_{8, 1}^{91} \)
		& \( \mathsf E_8 \)
		& \( \{ 1 \} \)
		& \( \mathsf{BC}_1 \)
		& \( \mathsf D_7 \)
		& \( 64 \),
		\( 14 \)
		& \( 1 \)
		\\ \hline \hline
		\( \up 1 {\mathsf E_{6, 2}^{28}} \)
		& \( \mathsf E_6 \)
		& \( \{ 1, 6 \} \)
		& \( \mathsf A_2 \)
		& \( \mathsf D_4 \)
		& \( 8 \)
		& \( 1 \)
		\\ \hline
		\( \mathsf G_{2, 2}^0 \)
		& \( \mathsf G_2 \)
		& \( \{ 1_{\mathrm s}, 2_{\mathrm l} \} \)
		& \( \mathsf G_2 \)
		& \( \varnothing \)
		& \( 1 \),
		\( 1 \)
		& \( 1 \)
		\\ \hline
		\( \up 3 {\mathsf D_{4, 2}^2} \),
		\( \up 6 {\mathsf D_{4, 2}^2} \)
		& \( \mathsf D_4 \)
		& \(
			\{
				1_{\mathrm s},
				2_{\mathrm l},
				3_{\mathrm s},
				4_{\mathrm s}
			\}
		\)
		& \( \mathsf G_2 \)
		& \( \varnothing \)
		& \( 3 \),
		 \( 1 \)
		& \( 6 \)
		\\ \hline
		\( \up 1 {\mathsf E_{6, 2}^{16}} \),
		\( \up 2 {\mathsf E_{6, 2}^{16''}} \)
		& \( \mathsf E_6 \)
		& \( \{ 2_{\mathrm l}, 4_{\mathrm s} \} \)
		& \( \mathsf G_2 \)
		& \( 2 \mathsf A_2 \)
		& \( 9 \),
		\( 1 \)
		& \( 2 \)
		\\ \hline
		\( \mathsf E_{8, 2}^{78} \)
		& \( \mathsf E_8 \)
		& \( \{ 7_{\mathrm s}, 8_{\mathrm l} \} \)
		& \( \mathsf G_2 \)
		& \( \mathsf E_6 \)
		& \( 27 \),
		\( 1 \)
		& \( 1 \)
		\\ \hline
		\( \up 2 {\mathsf E_{6, 2}^{16'}} \)
		& \( \mathsf E_6 \)
		& \(
			\{
				1_{\mathrm{us}},
				2_{\mathrm s},
				6_{\mathrm{us}}
			\}
		\)
		& \( \mathsf{BC}_2 \)
		& \( \mathsf A_3 \)
		& \( 8 \),
		\( 6 \),
		\( 1 \)
		& \( 2 \)
		\\ \hline
		\( \mathsf E_{7, 2}^{31} \)
		& \( \mathsf E_7 \)
		& \( \{ 1_{\mathrm s}, 6_{\mathrm{us}} \} \)
		& \( \mathsf{BC}_2 \)
		& \( \mathsf A_1 + \mathsf D_4 \)
		& \( 16 \),
		\( 8 \),
		\( 1 \)
		& \( 1 \)
		\\ \hline
		\( \mathsf E_{8, 2}^{66} \)
		& \( \mathsf E_8 \)
		& \( \{ 1_{\mathrm{us}}, 8_{\mathrm s} \} \)
		& \( \mathsf{BC}_2 \)
		& \( \mathsf D_6 \)
		& \( 32 \),
		\( 12 \),
		\( 1 \)
		& \( 1 \)
		\\ \hline \hline
		\( \mathsf E_{7, 3}^{28} \)
		& \( \mathsf E_7 \)
		& \(
			\{
				1_{\mathrm s},
				6_{\mathrm s},
				7_{\mathrm l}
			\}
		\)
		& \( \mathsf C_3 \)
		& \( \mathsf D_4 \)
		& \( 8 \),
		\( 1 \)
		& \( 1 \)
		\\ \hline
		\( \mathsf F_{4, 4}^0 \)
		& \( \mathsf F_4 \)
		& \(
			\{
				1_{\mathrm l},
				2_{\mathrm l},
				3_{\mathrm s},
				4_{\mathrm s}
			\}
		\)
		& \( \mathsf F_4 \)
		& \( \varnothing \)
		& \( 1 \),
		\( 1 \)
		& \( 1 \)
		\\ \hline
		\( \up 2 {\mathsf E_{6, 4}^2} \)
		& \( \mathsf E_6 \)
		& \(
			\{
				1_{\mathrm s},
				2_{\mathrm l},
				3_{\mathrm s},
				4_{\mathrm l},
				5_{\mathrm s},
				6_{\mathrm s}
			\}
		\)
		& \( \mathsf F_4 \)
		& \( \varnothing \)
		& \( 2 \),
		\( 1 \)
		& \( 2 \)
		\\ \hline
		\( \mathsf E_{7, 4}^9 \)
		& \( \mathsf E_7 \)
		& \(
			\{
				1_{\mathrm l},
				3_{\mathrm l},
				4_{\mathrm s},
				6_{\mathrm s}
			\}
		\)
		& \( \mathsf F_4 \)
		& \( 3 \mathsf A_1 \)
		& \( 4 \),
		\( 1 \)
		& \( 1 \)
		\\ \hline
		\( \mathsf E_{8, 4}^{28} \)
		& \( \mathsf E_8 \)
		& \(
			\{
				1_{\mathrm s},
				6_{\mathrm s},
				7_{\mathrm l},
				8_{\mathrm l}
			\}
		\)
		& \( \mathsf F_4 \)
		& \( \mathsf D_4 \)
		& \( 8 \),
		\( 1 \)
		& \( 1 \)
		\\ \hline
		\( \up 1 {\mathsf E_{6, 6}^0} \)
		& \( \mathsf E_6 \)
		& \( \{ 1, 2, 3, 4, 5, 6 \} \)
		& \( \mathsf E_6 \)
		& \( \varnothing \)
		& \( 1 \)
		& \( 1 \)
		\\ \hline
		\( \mathsf E_{7, 7}^0 \)
		& \( \mathsf E_7 \)
		& \( \{ 1, 2, 3, 4, 5, 6, 7 \} \)
		& \( \mathsf E_7 \)
		& \( \varnothing \)
		& \( 1 \)
		& \( 1 \)
		\\ \hline
		\( \mathsf E_{8, 8}^0 \)
		& \( \mathsf E_8 \)
		& \( \{ 1, 2, 3, 4, 5, 6, 7, 8 \} \)
		& \( \mathsf E_8 \)
		& \( \varnothing \)
		& \( 1 \)
		& \( 1 \)
		\\ \hline
	\end{tabular}
\end{center}

Finally, we give the complete list of isomorphisms and equivalences between isotropic irreducible Tits indices. Two such indices are called
\textit{isomorphic} (denoted by
\( \cong \)) if they differ by an automorphism of the Dynkin diagram of
\( \widetilde \Phi \) and, possibly, by its renaming via an exceptional isomorphism. We say that two indices are
\textit{equivalent} (denoted by
\( \sim \)) if the induced maps
\( u \) differ by isomorphisms between the corresponding root systems
\( \widetilde \Phi \) and
\( \Phi \). Equivalent Tits indices are indistinguishable for our purposes.

\begin{align*}
	\up 1 {\mathsf A_{1, 1}^{(1)}}
	&\cong
	\mathsf B_{1, 1}
	\cong
	\mathsf C_{1, 1}^{(1)},
	&
	\up 1 {\mathsf D_{4, 1}^{(1)}}
	&\cong
	\up 1 {\mathsf D_{4, 1}^{(4)}}
	\sim
	\up 2 {\mathsf D_{4, 1}^{(1)}},
	\\
	\mathsf B_{2, 1}
	&\cong
	\mathsf C_{2, 1}^{(2)},
	&
	\up 1 {\mathsf D_{4, 1}^{(2)}}
	&\sim
	\up 2 {\mathsf D_{4, 1}^{(2)}}
	\sim
	\up 3 {\mathsf D_{4, 1}^9}
	\sim
	\up 6 {\mathsf D_{4, 1}^9},
	\\
	\mathsf B_{2, 2}
	&\cong
	\mathsf C_{2, 2}^{(1)},
	&
	\up 1 {\mathsf D_{4, 2}^{(1)}}
	&\cong
	\up 1 {\mathsf D_{4, 2}^{(2)}}
	\sim
	\up 2 {\mathsf D_{4, 2}^{(1)}},
	\\
	\up 2 {\mathsf A_{3, 1}^{(1)}}
	&\cong
	\up 2 {\mathsf D_{3, 1}^{(2)}},
	&
	\up 3 {\mathsf D_{4, 2}^2}
	&\sim
	\up 6 {\mathsf D_{4, 2}^2},
	\\
	\up 2 {\mathsf A_{3, 1}^{(2)}}
	&\cong
	\up 2 {\mathsf D_{3, 1}^{(1)}}
	\sim
	\up 1 {\mathsf A_{3, 1}^{(2)}}
	\cong
	\up 1 {\mathsf D_{3, 1}^{(1)}},
	&
	\up 1 {\mathsf E_{6, 2}^{16}}
	&\sim
	\up 2 {\mathsf E_{6, 2}^{16''}},
	\\
	\up 2 {\mathsf A_{3, 2}^{(1)}}
	&\cong
	\up 2 {\mathsf D_{3, 2}^{(1)}},
	&
	\up 1 {\mathsf A_{n, 1}^{(d)}}
	&\sim
	\up 2 {\mathsf A_{n, 1}^{(d)}}
	\text{ for }
	d \geq 3,\,
	n \geq 5,\,
	2 d = n + 1,
	\\
	\up 1 {\mathsf A_{3, 3}^{(1)}}
	&\cong
	\up 1 {\mathsf D_{3, 3}^{(1)}},
	&
	\up 1 {\mathsf D_{n, r}^{(d)}}
	&\sim
	\up 2 {\mathsf D_{n, r}^{(d)}}
	\text{ for }
	d = 2^k \mid 2 n,\,
	d \geq 1,\,
	r \geq 1,\,
	n \geq 5,\,
	r d \leq n - 2.
\end{align*}

\section{Technical lemmas}

Recall that a subset
\( \Sigma \subseteq \Phi \) is called
\textit{closed} if
\( (\Sigma + \Sigma) \cap \Phi \subseteq \Sigma \). A closed subset
\( \Sigma \) is
\begin{itemize}

	\item
	\textit{unipotent} if it is contained in an open half-space (equivalently, if it does not contain opposite roots);

	\item
	\textit{closed root subsystem} if
	\( \Sigma = -\Sigma \), so it is a root system itself;

	\item
	\textit{parabolic} if
	\( \Phi = \Sigma \cup (-\Sigma) \);

	\item
	\textit{saturated} if
	\( \Sigma = \Phi \cap \mathbb R_{\geq 0} \Sigma \).

\end{itemize}

Note that any parabolic set is saturated (and every saturated set is an intersection of parabolic ones), but
\( \mathsf A_2 \subseteq \mathsf G_2 \) and
\( \mathsf C_2 \subseteq \mathsf{BC}_2 \) are not saturated. Any closed subset
\( \Sigma \) admits a unique decomposition
\(
	\Sigma
	=
	\Sigma_{\mathrm r} \sqcup \Sigma_{\mathrm u}
\), where
\( \Sigma_{\mathrm r} = \Sigma \cap (-\Sigma) \) is a closed root subsystem and
\( \Sigma_{\mathrm u} = \Sigma \setminus (-\Sigma) \) is a unipotent set. Conversely, if
\( \Sigma_{\mathrm r} \) is a closed root subsystem,
\( \Sigma_{\mathrm u} \) is a unipotent set disjoint with
\( \Sigma_{\mathrm r} \), and
\(
	(\Sigma_{\mathrm r} + \Sigma_{\mathrm u}) \cap \Phi
	\subseteq
	\Sigma_{\mathrm u}
\), then
\( \Sigma_{\mathrm r} \cup \Sigma_{\mathrm u} \) is a closed set. A closed set
\( \Sigma \) is parabolic if and only if there are
\( w \in \mathrm W(\Phi) \) and
\( J \subseteq \Delta \) such that
\( w \Sigma = (\Phi^+ + \mathbb Z J) \cap \Phi \). Here
\( \Phi^+ \subseteq \Phi \) is the set of positive roots and
\( \Delta \subseteq \Phi^+ \) is the set of basic roots. The classes of closed sets, unipotent sets, closed root subsystems, and saturated sets are closed under intersection.

If
\( \Sigma \subseteq \Phi \) is closed, then
\(
	u^{-1}(\Sigma \cup \{ 0 \})
	\subseteq
	\widetilde \Phi
\) is also closed. More precisely,
\begin{itemize}

	\item
	if
	\( \Sigma \) is unipotent, then
	\( u^{-1}(\Sigma) \) is unipotent;

	\item
	if
	\( \Sigma \) is a closed root subsystem, then
	\( u^{-1}(\Sigma \cup \{ 0 \}) \) is a closed root subsystem;

	\item
	if
	\( \Sigma \) is parabolic, then
	\( u^{-1}(\Sigma \cup \{ 0 \}) \) is parabolic;

	\item
	if
	\( \Sigma \) is saturated, then
	\( u^{-1}(\Sigma \cup \{ 0 \}) \) is saturated.

\end{itemize}
If
\( \Sigma \subseteq \Phi \) is a closed root subsystem, then the smallest closed set containing
\( u^{-1}(\Sigma) \) is also a closed root subsystem.

For any closed subset
\( \Sigma \subseteq \Phi \) let
\( G_\Sigma \leq G \) and
\( G^0_\Sigma \leq G \) be the group subsheaves generated by
\( \bigcup_{\alpha \in \Sigma} U_\alpha \) and
\( L \cup \bigcup_{\alpha \in \Sigma} U_\alpha \), they are actually smooth closed group subschemes with connected geometric fibers over
\( K \) and
\( G^0_\Sigma \) is of type (RC)
\cite[\S XXII.5]{sga3}. More precisely, if
\( \Sigma \) is a root subsystem, then
\( G_\Sigma \) and
\( G^0_\Sigma \) are reductive group subschemes (with induced isotropic structure). If
\( \Sigma \) is unipotent, then
\( G_\Sigma \) is a unipotent subgroup, i.e. it is an \'etale twisted form of some
\( \mathbb A^N \) as a scheme. Finally, in general
\(
	G_\Sigma
	=
	G_{\Sigma_{\mathrm r}}
	\rtimes
	G_{\Sigma_{\mathrm u}}
\) and
\(
	G^0_\Sigma
	=
	G^0_{\Sigma_{\mathrm r}}
	\rtimes
	G_{\Sigma_{\mathrm u}}
\). If
\( \Sigma \) is parabolic, then
\( G^0_\Sigma \) is a parabolic group subscheme. For any parabolic
\( \Sigma \subseteq \Phi \) the multiplication morphism
\[
	G_{-\Sigma_{\mathrm u}}
	\times
	G^0_{\Sigma_{\mathrm r}}
	\times
	G_{\Sigma_{\mathrm u}}
	\to
	G
\]
is an open scheme embedding (so for any closed
\( \Sigma \) such a morphism is just a scheme embedding). Also, for any unipotent
\( \Sigma \) the multiplication morphism
\(
	\prod_{\alpha \in \Sigma \setminus 2 \Sigma}
		G_\alpha
	\to
	G_\Sigma
\) is an isomorphism of schemes for any order of the factors.

\begin{lemma}
	\label{gauss}
	Suppose that
	\( K \) is semi-local. Then every isotropic reductive group
	\( G \) has a
	\textit{Gauss decomposition}
	\[
		G(K)
		=
		G_{\Phi^+}(K)\,
		G_{\Phi^-}(K)\,
		G_{\Phi^+}(K)\,
		L(K).
	\]
	Moreover, for any closed subset
	\( \Sigma \subseteq \Phi \) there is a decomposition
	\[
		G^0_\Sigma(K)
		=
		G_{\Sigma_{\mathrm r} \cap \Phi^+}(K)\,
		G_{\Sigma \cap \Phi^-}(K)\,
		G_{\Sigma \cap \Phi^+}(K)\,
		L(K).
	\]
\end{lemma}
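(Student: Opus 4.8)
The plan is to prove the first identity and then deduce the ``moreover'' part from it. Write $\Pi$ for the set of positive roots of $\Phi$ and set $\Omega_0 := G_{-\Pi}\,L\,G_\Pi$ and $\Omega := G_\Pi(K)\,G_{-\Pi}(K)\,G_\Pi(K)\,L(K)$. Since $\Pi$ is a parabolic subset of $\Phi$ with $\Pi_{\mathrm r} = \varnothing$, the multiplication $G_{-\Pi}\times L\times G_\Pi\to G$ is an open scheme embedding; hence $\Omega_0$ is an open subscheme of $G$ with $\Omega_0(K) = G_{-\Pi}(K)\,L(K)\,G_\Pi(K)$, and, as $L$ normalises $G_\Pi$, one has $\Omega = G_\Pi(K)\cdot\Omega_0(K)$. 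Therefore the first identity is equivalent to the \emph{moving into the big cell} assertion: for every $g\in G(K)$ there is $h\in G_\Pi(K)$ with $hg\in\Omega_0(K)$. This is what I would prove.

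First, over a field $k$ the corresponding statement $G(k) = G_\Pi(k)\,\Omega_0(k)$ is the classical Gauss/Bruhat decomposition of an isotropic reductive group over a field. Concretely, $G^0_\Pi$ is a parabolic $k$-subgroup with unipotent radical $G_\Pi$ and Levi $L$, one has the double coset decomposition $G(k) = \bigcup_w G^0_\Pi(k)\,\dot w\,G^0_\Pi(k)$, each representative $\dot w$ may be taken in $G_\Pi(k)\,G_{-\Pi}(k)\,G_\Pi(k)$ (being a product of the Weyl elements $n_\alpha\in G_\alpha(k)\,G_{-\alpha}(k)\,G_\alpha(k)$), and then, using $G^0_{\pm\Pi} = L\ltimes G_{\pm\Pi}$,
\[
	G^0_\Pi(k)\,\dot w\,G^0_\Pi(k)
	= L(k)\,G_\Pi(k)\,\dot w\,G_\Pi(k)\,L(k)
	\subseteq G_\Pi(k)\,G_{-\Pi}(k)\,G_\Pi(k)\,L(k),
\]
the last inclusion using $\dot w\in G_\Pi(k)G_{-\Pi}(k)G_\Pi(k)$ and that $L$ normalises $G_{\pm\Pi}$. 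To pass to the semi-local case, let $\mathfrak m_1,\dots,\mathfrak m_s$ be the maximal ideals of $K$ and $k_i = K/\mathfrak m_i$; for each $i$ choose $\bar h_i\in G_\Pi(k_i)$ with $\bar h_i\bar g\in\Omega_0(k_i)$. The $K$-scheme $G_\Pi$ is isomorphic to an affine space over the semi-local ring $K$ (it is an iterated extension of vector-group-like schemes over the affine base), so $G_\Pi(K)\to\prod_i G_\Pi(k_i)$ is onto; lift $(\bar h_i)_i$ to $h\in G_\Pi(K)$. Then $h^{-1}g\colon\Spec K\to G$ carries every closed point of $\Spec K$ into the open subscheme $\Omega_0$, so the open set $(h^{-1}g)^{-1}(\Omega_0)\subseteq\Spec K$ contains $\mathfrak m_1,\dots,\mathfrak m_s$; as these are \emph{all} the maximal ideals of $K$, the only such open set is $\Spec K$ itself, whence $h^{-1}g\in\Omega_0(K)$ and $g = h\cdot(h^{-1}g)\in\Omega$.

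For the ``moreover'' part, $\Sigma_{\mathrm r}$ is a closed root subsystem, $G^0_{\Sigma_{\mathrm r}}$ is reductive with the induced isotropic structure, and $\Sigma_{\mathrm r}\cap\Pi$ is a positive system for it; so the first identity (applied componentwise if $\Sigma_{\mathrm r}$ is reducible) gives $G^0_{\Sigma_{\mathrm r}}(K) = G_{\Sigma_{\mathrm r}\cap\Pi}(K)\,G_{\Sigma_{\mathrm r}\cap(-\Pi)}(K)\,G_{\Sigma_{\mathrm r}\cap\Pi}(K)\,L(K)$. Combining this with the semidirect decomposition $G^0_\Sigma = G^0_{\Sigma_{\mathrm r}}\ltimes G_{\Sigma_{\mathrm u}}$, the factorisations $G_{\Sigma_{\mathrm u}}(K) = G_{\Sigma_{\mathrm u}\cap\Pi}(K)\,G_{\Sigma_{\mathrm u}\cap(-\Pi)}(K)$ and $G_{\Sigma\cap(\pm\Pi)}(K) = G_{\Sigma_{\mathrm r}\cap(\pm\Pi)}(K)\,G_{\Sigma_{\mathrm u}\cap(\pm\Pi)}(K)$ (products of root subgroups in suitable orders), the graded commutator relations $[G_{\Sigma'},G_{\Sigma''}]\le\langle G_\gamma\mid\gamma\in(\mathbb N_+\Sigma'+\mathbb N_+\Sigma'')\cap\Phi\rangle$, and the facts that $L$ normalises every $G_{\Sigma'}$ while $G_{\Sigma_{\mathrm r}\cap\Pi}$ normalises $G_{\Sigma_{\mathrm u}\cap\Pi}$, one rearranges the unipotent factors into the prescribed positions.

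I expect the genuine obstacles to be, on one hand, the field step — in particular guaranteeing that the Weyl representatives $\dot w$ lie in $G_\Pi(k)\,G_{-\Pi}(k)\,G_\Pi(k)$: over residue fields where $G^0_\Pi$ is minimal this is standard relative Bruhat theory, but over residue fields where $G$ is more isotropic and $G^0_\Pi$ is a \emph{non-minimal} parabolic one must run the argument with a minimal parabolic $P_0\subseteq G^0_\Pi$ and feed in the Gauss decomposition of $L$ relative to $P_0\cap L$ — and, on the other hand, the bookkeeping of the final rearrangement in the ``moreover'' part, which is elementary but needs careful use of the commutator relations to move each $G_{\Sigma_{\mathrm u}\cap(\pm\Pi)}$ past the factors $G_{\Sigma_{\mathrm r}\cap(\pm\Pi)}$ without producing factors outside $\Sigma_{\mathrm u}$. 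Once these are settled, the semi-local gluing above is routine.
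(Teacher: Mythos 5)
Your route is genuinely different from the paper's. The paper does not descend to residue fields at all: it first reduces to connected $\Spec(K)$, then invokes [SGA3, XXVI.7.4.2] to compare the given isotropic structure $(L,G_\alpha)$ with a \emph{maximal} one $(L',G'_\alpha)$ (so that each $G_\alpha$ is a product of root subgroups of the maximal structure and $L\supseteq L'$), and quotes [SGA3, Cor.\ XXVI.5.2], which already gives the Gauss decomposition over a semi-local base for the maximal structure; the ``moreover'' part is then exactly the rearrangement you describe. Your proof instead re-derives the semi-local statement from the field case by the gluing ``big cell is open, $G_\Pi(K)\to\prod_i G_\Pi(k_i)$ is onto, an open subset of $\Spec K$ containing all maximal ideals is everything''. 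That gluing is correct — the surjectivity is justified because $G_\Pi\cong\prod_{\alpha\in\Pi\setminus 2\Pi}G_\alpha$ as schemes and each $G_\alpha$ is the vector-group scheme of a projective, hence over semi-local $K$ free, module $\mathfrak g_\alpha$ (or $\mathfrak g_\alpha\oplus\mathfrak g_{2\alpha}$), so $G_\Pi\cong\mathbb A^N$ — and it is essentially how SGA3 proves XXVI.5.2. What the paper's route buys is that the comparison with a maximal/minimal structure is done once, over the semi-local base; your route must still confront the same non-minimality issue separately over each residue field.

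That residue-field step is the one place where your argument as written is circular rather than merely terse: writing $\dot w$ as a product of the $n_\alpha$ only places it in $(G_\Pi(k)\,G_{-\Pi}(k))^m\,G_\Pi(k)$, and collapsing $G_{-\Pi}G_\Pi G_{-\Pi}$ back to three alternating unipotent factors (times $L$) is itself an instance of the Gauss decomposition, so it cannot be used to prove it. You correctly name the repair — run the argument with a minimal parabolic $P_0\le G^0_\Pi$ of $G_{k_i}$ and push the extra Levi factors to the right using that $L$ normalises $G_{\pm\Pi}$ — but the cleanest course is simply to cite the field (or directly the semi-local) Gauss decomposition for minimal parabolics from Borel--Tits or [SGA3, XXVI.5.2], which is precisely what the paper does. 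With that input supplied, the rest of your proof, including the ``moreover'' bookkeeping (which matches the paper's computation essentially verbatim, applied componentwise to $\Sigma_{\mathrm r}$), is sound.
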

\begin{proof}
	We may assume that
	\( \Spec(K) \) is connected. In this case there is a maximal isotropic structure
	\( (L', G'_\alpha)_{\alpha \in \Phi'} \) and a map
	\( v \colon \Phi' \to \Phi \cup \{ 0 \} \) such that
	\( \Phi \subseteq v(\Phi') \),
	\( G'_\alpha = G_{v^{-1}(\alpha)} \),
	\( L' = G_{\Phi \cap v^{-1}(0)} \) \cite[XXVI.7.4.2]{sga3}. Gauss decomposition holds for
	\( G \) with respect to a maximal isotropic structure by
	\cite[corollary XXVI.5.2]{sga3}, i.e.
	\[
		G(K)
		=
		G'_{\Phi^{\prime +}}(K)\,
		G'_{\Phi^{\prime -}}(K)\,
		G'_{\Phi^{\prime +}}(K)\,
		L'(K).
	\]
	It follows that
	\[
		G(K)
		=
		G_{\Phi^+}(K)\,
		G_{\Phi^-}(K)\,
		G_{\Phi^+}(K)\,
		L(K)
	\]
	and
	\[
		G^0_\Sigma(K)
		=
		G_{\Sigma \cap \Phi^+}(K)\,
		G_{\Sigma \cap \Phi^-}(K)\,
		G_{\Sigma \cap \Phi^+}(K)\,
		L(K)
	\]
	for any root subsystem
	\( \Sigma \). If
	\( \Sigma \subseteq \Phi \) is a closed subset, then
	\begin{align*}
		G^0_\Sigma(K)
		&=
		G_{\Sigma_{\mathrm r} \cap \Phi^+}(K)\,
		G_{\Sigma_{\mathrm r} \cap \Phi^-}(K)\,
		G_{\Sigma_{\mathrm u}}(K)\,
		G_{\Sigma_{\mathrm r} \cap \Phi^+}(K)\,
		L(K) \\
		&=
		G_{\Sigma_{\mathrm r} \cap \Phi^+}(K)\,
		G_{\Sigma \cap \Phi^-}(K)\,
		G_{\Sigma \cap \Phi^+}(K)\,
		L(K).\qedhere
	\end{align*}
\end{proof}

For any closed subsets
\( \Sigma, \Sigma' \subseteq \Phi \) the intersection
\( G^0_\Sigma \cap G^0_{\Sigma'} \) is a smooth closed subscheme and its fiberwise connected component is
\( G^0_{\Sigma \cap \Sigma'} \) by
\cite[proposition XXII.5.4.5]{sga3}.

\begin{lemma}
	\label{subgr-int}
	Let
	\( \Sigma, \Sigma' \subseteq \Phi \) be closed subsets. If
	\( \Sigma' \) is saturated, then
	\(
		G^0_\Sigma \cap G^0_{\Sigma'}
		=
		G^0_{\Sigma \cap \Sigma'}
	\). If
	\( \Sigma' \) is unipotent, then
	\(
		G^0_\Sigma \cap G_{\Sigma'}
		=
		G_{\Sigma} \cap G_{\Sigma'}
		=
		G_{\Sigma \cap \Sigma'}
	\).
\end{lemma}
\begin{proof}
	For the first claim it suffices to prove that
	\(
		G^0_\Sigma \cap G^0_{\Sigma'}
		=
		G^0_{\Sigma \cap \Sigma'}
	\) if
	\( \Sigma' \supseteq \Phi^+ \) is parabolic and
	\( K \) is local. Take
	\( g \in G^0_\Sigma(K) \cap G^0_{\Sigma'}(K) \). By lemma
	\ref{gauss} we can multiply
	\( g \) from both sides by elements of
	\( G^0_{\Sigma \cap \Sigma'} \) to get a new element
	\(
		g'
		\in
		G_{\Sigma \cap \Phi^-}(K)
		\cap
		G^0_{\Sigma'}(K)
	\). But such
	\( g' \) necessarily lies in
	\( G_{\Sigma \cap \Sigma' \cap \Phi^-} \).

	Now let us prove the second claim. Without loss of generality
	\( \Sigma' \subseteq \Phi^+ \), so
	\[
		G^0_\Sigma \cap G_{\Sigma'}
		=
		G^0_\Sigma \cap G^0_{\Phi^+} \cap G_{\Sigma'}
		=
		G^0_{\Sigma \cap \Phi^+} \cap G_{\Sigma'}
		=
		G_{\Sigma \cap \Phi^+ \cap \Sigma'}
		=
		G_{\Sigma \cap \Sigma'}.
		\qedhere
	\]
\end{proof}

Recall
\cite[\S 4]{twi-for} that a
\textit{%
	\( 2 \)-step
	\( K \)-module%
}
\( (M, M_0) \) consists of
\begin{itemize}

	\item
	a group
	\( M \) with the group operation
	\( \dotplus \);

	\item
	a central subgroup
	\( M_0 \leq M \) such that
	\( [M, M]^\cdot \leq M_0 \);

	\item
	a left
	\( K \)-module structure on
	\( M_0 \);

	\item
	a right action
	\( ({-}) \cdot ({=}) \colon M \times K \to M \) of the multiplicative monoid by group automorphisms such that
	\(
		[m \cdot k, m' \cdot k']^\cdot
		=
		k k' [m, m']^\cdot
	\),
	\(
		m \cdot (k + k')
		=
		m \cdot k
		\dotplus
		k k' \tau(m)
		\dotplus
		m \cdot k'
	\) for some (uniquely determined)
	\( \tau(m) \in M_0 \),
	\( m_0 \cdot k = k^2 m_0 \) for
	\( m_0 \in M_0 \).

\end{itemize}
Then
\begin{align*}
	% -------------------------------- %
	m \cdot 0 &= \dot 0,
	&
	% -------------------------------- %
	m \cdot (-k) &= k^2 \tau(m) \dotminus m \cdot k,
	\\
	% -------------------------------- %
	\tau(\dot 0) &= \dot 0,
	&
	% -------------------------------- %
	\tau(m \dotplus m')
	&=
	\tau(m) + [m, m']^\cdot + \tau(m'),
	\\
	% -------------------------------- %
	\tau(\dotminus m) &= \dotminus \tau(m),
	&
	% -------------------------------- %
	\tau(m \cdot k) &= k^2 \tau(m),
	\\
	% -------------------------------- %
	\tau(m_0) &= 2 m_0
	\text{ for }
	m_0 \in M_0.
	% -------------------------------- %
\end{align*}
A pair of subsets
\( (X, X_0) \subseteq (M, M_0) \) (i.e.
\( X \subseteq M \) and
\( X_0 \subseteq M_0 \))
\textit{generates}
\( (M, M_0) \) if
\( X_0 \) generates the
\( K \)-module
\( M_0 \) and
\( X \) generates the
\( K \)-module
\( M / M_0 \).

We say that a
\( 2 \)-step nilpotent
\( K \)-module
\( (M, M_0) \) is
\textit{locally free} if
\( M_0 \) and
\( M / M_0 \) are finitely generated projective
\( K \)-modules. In this case
\( (M, M_0) \)
\textit{splits}, i.e.
\( M = M_0 \dotoplus M_1 \) for some
\( K \)-module
\( M_1 \cong M / M_0 \) with bilinear map
\( c \colon M_1 \times M_1 \to M_0 \) and the operations are given by
\begin{align*}
	% -------------------------------- %
	(m_0 \dotoplus m_1)
	\dotplus
	(m'_0 \dotoplus m'_1)
	&=
	(m_0 + c(m_1, m'_1) + m'_0)
	\dotoplus
	(m_1 + m'_1),
	\\
	% -------------------------------- %
	(m_0 \dotoplus m_1) \cdot k
	&=
	k^2 m_0 \dotoplus k m_1,
	\\
	% -------------------------------- %
	\tau(m_0 \dotoplus m_1)
	&=
	(2 m_0 - c(m_1, m_1)) \dotoplus 0.
	% -------------------------------- %
\end{align*}

Every locally free
\( 2 \)-step nilpotent
\( K \)-module
\( (M, M_0) \) determines a representable fpqc sheaf
\( \mathbb M_0(E) = E \otimes_K M_0 \),
\(
	\mathbb M(E)
	=
	M \boxtimes E
	=
	E \otimes_K M_0 \dotoplus E \otimes_K M_1
\) of locally free
\( 2 \)-step nilpotent modules (where
\( K \to E \) is a ring homomorphism), the latter one is independent on the choice of the splitting
\cite[\S 4]{twi-for}. Also, locally free
\( 2 \)-step nilpotent
\( K \)-modules satisfy the fpqc descent. If
\( G \) is a group scheme acting on such a sheaf
\( (\mathbb M, \mathbb M_0) \) by automorphisms of
\( 2 \)-step nilpotent modules and
\( G \) stabilizes some generating set
\( (X, X_0) \subseteq (M, M_0) \), then the action is trivial.

By
\cite[\S 2]{loc-iso-ele} there are canonical homomorphisms
\(
	t_\alpha
	\colon
	\mathfrak g_\alpha
	\to
	G_\alpha(K)
\) for non-ultrashort
\( \alpha \in \Phi \) and
\(
	t_\alpha
	\colon
	\mathfrak g_\alpha
	\dotoplus
	\mathfrak g_{2 \alpha}
	\to
	G_\alpha(K)
\) for ultrashort
\( \alpha \). In the second case
\(
	\mathfrak g_\alpha
	\dotoplus
	\mathfrak g_{2 \alpha}
\) means a locally free
\( 2 \)-step nilpotent
\( K \)-module with the canonical short exact sequence
\(
	0
	\to
	\mathfrak g_\alpha
	\to
	\mathfrak g_\alpha
	\dotoplus
	\mathfrak g_{2 \alpha}
	\to
	\mathfrak g_{2 \alpha}
	\to
	0
\), though its splitting is non-canonical. Since
\( G \) is root graded locally in Zariski topology (by our additional assumption on isotropic data), the Lie bracket
\(
	\mathfrak g_\alpha
	\times
	\mathfrak g_\beta
	\to
	\mathfrak g_{\alpha + \beta}
\) is non-degenerate on the second argument if
\( \alpha \) is long and
\( \frac \pi 2 < \angle (\alpha, \beta) < \pi \), i.e.
\(
	\mathfrak g_\beta
	\to
	\mathrm{Hom}(
		\mathfrak g_\alpha,
		\mathfrak g_{\alpha + \beta}
	)
\)
is injective. There are also several other instances of non-degeneracy, we check them case by case.

\iffalse
\begin{lemma}
	\label{g2-nondeg}
	Suppose that
	\( \Phi \) is of type
	\( \mathsf G_2 \) and the rank of
	\( \widetilde \Phi \) is larger than
	\( 2 \). Then for any short roots
	\( \alpha \) and
	\( \beta \) such that
	\( \angle(\alpha, \beta) = \frac \pi 6 \) the Lie bracket
	\(
		\mathfrak g_\alpha
		\times
		\mathfrak g_\beta
		\to
		\mathfrak g_{\alpha + \beta}
	\) is non-degenerate.
\end{lemma}
\begin{proof}
	There only cases are
	\( \up 3 {\mathsf D_{4, 2}^2} \),
	\( \up 6 {\mathsf D_{4, 2}^2} \),
	\( \up 1 {\mathsf E_{6, 2}^{16}} \),
	\( \up 2 {\mathsf E_{6, 2}^{16''}} \), and
	\( \mathsf E_{8, 2}^{72} \). Since
	\( \widetilde \Phi \) is simply laced and
	\( |u^{-1}(\alpha + \beta)| = 1 \), it suffices to prove that
	\(
		u^{-1}(\alpha)
		=
		u^{-1}(\alpha + \beta) + u^{-1}(-\beta)
	\). This is easy to check using the three root diagrams for basic
	\( \alpha + \beta \) (i.e.
	\( u^{-1}(\alpha + \beta) \) is one of the basic roots from
	\( J \) and
	\( u^{-1}(\alpha) \) consists of all roots connected to the remaining roots from
	\( J \) by paths labeled not by
	\( J \)), namely, that from every root from
	\( u^{-1}(\alpha) \) there is an upward edge labeled by
	\( u^{-1}(\alpha + \beta) \).
\end{proof}
\fi

\begin{lemma}
	\label{c2-nondeg}
	Suppose that
	\( \Phi \) is of type
	\( \mathsf C_2 \). Then for any long
	\( \alpha \) and short
	\( \beta \) such that
	\( \angle(\alpha, \beta) = \frac {3 \pi} 4 \) the Lie bracket
	\(
		\mathfrak g_\alpha
		\times
		\mathfrak g_\beta
		\to
		\mathfrak g_{\alpha + \beta}
	\) is non-degenerate.
\end{lemma}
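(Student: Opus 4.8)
The plan is to reduce the claim, just as in lemma \ref{g2-nondeg} for the $\mathsf G_2$ case, to a purely combinatorial statement about the preimage root system $\widetilde\Phi$ under $u$. Since $G$ is $\Phi$-graded and $\alpha$ is long, the Lie bracket $\mathfrak g_\alpha \times \mathfrak g_\beta \to \mathfrak g_{\alpha+\beta}$ is already known to be non-degenerate on the second argument (the general fact stated just before lemma \ref{c2-nondeg}, applicable since $\tfrac\pi2 < \angle(\alpha,\beta) < \pi$). So what remains is non-degeneracy on the \emph{first} argument: I must show that if $x \in \mathfrak g_\alpha$ and $[x, y] = \dot 0$ for all $y \in \mathfrak g_\beta$, then $x = 0$. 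Over the étale cover $\widetilde K$ where $G$ splits, $\mathfrak g_\alpha = \bigoplus_{\widetilde\alpha \in u^{-1}(\alpha)} \widetilde K\, e_{\widetilde\alpha}$ and similarly for $\beta$ and $\alpha+\beta$, and the bracket is computed by the structure constants $N_{\widetilde\alpha,\widetilde\beta}$ of the split group. By fpqc descent it suffices to verify the claim after this base change, i.e. to check the corresponding statement for the split reductive group scheme $\mathrm G(X^\vee,\widetilde\Phi^\vee,X,\widetilde\Phi;\widetilde K)$.

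The key combinatorial step is therefore: for every $\widetilde\alpha \in u^{-1}(\alpha)$ there exists $\widetilde\beta \in u^{-1}(\beta)$ with $\widetilde\alpha + \widetilde\beta \in \widetilde\Phi$ (hence automatically $\widetilde\alpha + \widetilde\beta \in u^{-1}(\alpha+\beta)$, since $u$ is additive where defined and $\alpha+\beta$ is short), \emph{and} moreover the resulting map $u^{-1}(\alpha) \to u^{-1}(\alpha+\beta)$ is such that a nonzero $x$ cannot be annihilated. Since $\widetilde\Phi$ is simply laced in all the relevant indices (I will enumerate them: $\Phi = \mathsf C_2 = \mathsf{BC}_2$ forces one of $\up 2{\mathsf A_{n,2}^{(d)}}$, $\mathsf C_{n,2}^{(d)}$, $\up 1{\mathsf D_{n,2}^{(d)}}$, $\up 2{\mathsf D_{n,2}^{(d)}}$, or the exceptional indices $\up 2{\mathsf E_{6,2}^{16'}}$, $\mathsf E_{7,2}^{31}$, $\mathsf E_{8,2}^{66}$), all structure constants $N_{\widetilde\alpha,\widetilde\beta}$ are $\pm 1$, so whenever $\widetilde\alpha + \widetilde\beta \in \widetilde\Phi$ the bracket $[e_{\widetilde\alpha}, \mathfrak g_\beta]$ contains a unit multiple of $e_{\widetilde\alpha+\widetilde\beta}$. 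Thus it is enough to show $u^{-1}(\alpha) + u^{-1}(\beta) \supseteq u^{-1}(\alpha+\beta)$ via brackets that separate the coordinates $e_{\widetilde\alpha}$; concretely, it suffices that for each $\widetilde\alpha \in u^{-1}(\alpha)$ there is $\widetilde\beta \in u^{-1}(\beta)$ with $\widetilde\alpha+\widetilde\beta \in \widetilde\Phi$ but $\widetilde\alpha' + \widetilde\beta \notin \widetilde\Phi$ for a "compatible" family — but in fact, because $\alpha+\beta$ is short and $|u^{-1}(\alpha+\beta)| = |u^{-1}(\text{short})|$ typically exceeds $|u^{-1}(\alpha)| = |u^{-1}(\text{long})|$ or matches it in the pattern governed by the Tits index tables, a cleaner route is to observe directly from the root-diagram description of $u$ (the preimages are described in the last column of the tables, e.g. $u^{-1}(\mathbb Z\alpha \cap (\Phi\cup\{0\}))$ is of type $\mathsf A_5$, $\mathsf D_4$, $2\mathsf A_1 + \mathsf A_3$ etc.) that the $\mathsf C_2$-subsystem $\{\pm\alpha, \pm\beta, \pm(\alpha+\beta), \pm(\alpha+2\beta)\}$ pulls back to a closed root subsystem of $\widetilde\Phi$ in which every long relative root has a preimage admitting an addable short preimage. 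I will check this case by case using the Bourbaki diagrams.

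The main obstacle I expect is the bookkeeping in the handful of exceptional indices $\up 2{\mathsf E_{6,2}^{16'}}$, $\mathsf E_{7,2}^{31}$, $\mathsf E_{8,2}^{66}$, where $u^{-1}(\alpha)$ for long $\alpha$ has size $8$ (resp.\ $8$, $32$) and $u^{-1}(\alpha+\beta)$ for the short root has size $6$ (resp.\ $8$, $12$): here non-degeneracy on the first argument is not a triviality of dimensions and one genuinely has to trace which of the $8$ preimage roots $\widetilde\alpha$ can be completed to a root $\widetilde\alpha+\widetilde\beta$. The cleanest way to settle these is to use the three-root (level) diagram for the basic root $u^{-1}(\alpha+\beta)$: in that diagram every element of $u^{-1}(\alpha)$ lies above some element of $u^{-1}(\alpha+\beta)$ along an edge labeled by a root of $u^{-1}(-\beta)$, which is exactly the statement that the corresponding bracket is nonzero; the map $u^{-1}(\alpha) \to u^{-1}(\alpha+\beta)$ so obtained, composed with the structure constants $\pm1$, has no kernel on individual basis vectors, and since distinct $\widetilde\alpha$ hitting the same $\widetilde\alpha+\widetilde\beta$ would require distinct $\widetilde\beta$'s, a short linear-algebra argument over $\widetilde K$ (using that the $e_{\widetilde\alpha+\widetilde\beta}$ are a basis) finishes it. The subregular indices $\mathsf C_{n,2}^{(d)}$ and $\up{1,2}{\mathsf D_{n,2}^{(d)}}$ with $d > 1$ reduce to the corresponding statement inside $\mathrm{Sp}$ / $\mathrm{SO}$ of a hyperbolic form, where it is classical; and the case $d = 1$ (split $\mathsf C_2$ or $\mathsf B_2$) is the well-known non-degeneracy of the Chevalley commutator in rank $2$.
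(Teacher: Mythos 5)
Your route is genuinely different from the paper's. The paper never passes to the split form to count roots: it identifies $\mathfrak g_\alpha$ and $\mathfrak g_\beta$ with concrete algebraic objects — in the orthogonal-like case a ring $R$ acting on a faithfully projective module $M$, in the symplectic-like case an algebra $R$ with involution and a form parameter $\Lambda\leq R$ — so that the bracket becomes module multiplication, and non-degeneracy follows from faithfulness of $M$ and from the existence of an invertible element in $\Lambda$ (which root-gradedness guarantees). Your reduction, by contrast, descends to $\widetilde K$ and turns the question into the combinatorial statement that every $\widetilde\alpha\in u^{-1}(\alpha)$ admits $\widetilde\beta\in u^{-1}(\beta)$ with $\widetilde\alpha+\widetilde\beta\in\widetilde\Phi$; this reduction is sound (faithfully flat descent of injectivity, plus injectivity of $\widetilde\alpha\mapsto\widetilde\alpha+\widetilde\beta$ for fixed $\widetilde\beta$ and unit structure constants), and correctly isolates the first argument as the only one not already covered by the general non-degeneracy remark. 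The paper's argument is shorter and uniform; yours requires a case-by-case verification that you announce but do not carry out.

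Beyond the missing verification, several of your factual inputs are wrong and would derail the check. First, the blanket claim that $\widetilde\Phi$ is simply laced is false for $\mathsf C_{n,2}^{(d)}$ and for $\mathsf B_{n,2}$ (which you omit from your list altogether, even though $\mathsf B_2=\mathsf C_2$); there you must either verify that the relevant structure constants are still $\pm1$ or argue classically as the paper does. Second, your preimage counts for the exceptional $\mathsf{BC}_2$ indices are transposed: for $\up 2{\mathsf E_{6,2}^{16'}}$, $\mathsf E_{7,2}^{31}$, $\mathsf E_{8,2}^{66}$ the \emph{long} relative root has $|u^{-1}(\alpha)|=1$ (the sizes $8$, $16$, $32$ you quote belong to the ultrashort roots, which are irrelevant to this lemma), so the ``main obstacle'' you identify does not exist — first-argument non-degeneracy is immediate when $\mathfrak g_\alpha$ has rank one — while the genuinely non-trivial cases are the classical symplectic-like ones where $\mathfrak g_\alpha$ is a form parameter of large rank. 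Also note that ``$\mathsf C_2=\mathsf{BC}_2$'' is not what you mean ($\mathsf B_2=\mathsf C_2$ is), and the lemma must in addition cover the $\mathsf C_2$-subsystems of higher-rank $\Phi$ (e.g. inside $\mathsf B_\ell$, $\mathsf F_4$), not only the rank-two Tits indices you enumerate. With these corrections your plan would go through, but as written the case analysis is both incomplete and aimed at the wrong cases.
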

\begin{proof}
	Without loss of generality,
	\( G \) splits. For the Tits indices
	\( \mathsf B_{n, 2} \) and
	\( \up k {\mathsf D_{n, r}^{(1)}} \) we have
	\( \mathfrak g_\alpha \cong K \) and
	\( \mathfrak g_\beta \cong K^n \), the pairing is isomorphic to
	\(
		K^n \times K \to K^n,\,
		(m, x) \mapsto m x
	\).
	
	For the Tits indices
	\( \mathsf C_{2 d, 2}^{(d)} \) or
	\( \up 1 {\mathsf D_{2 d, 2}^{(d)}} \) we have
	\( \mathfrak g_\beta \cong \mat(d, K) \) and
	\[
		\mathfrak g_\alpha
		\cong
		\Lambda
		=
		\{
			x \in \mat(d, K)
			\mid
			x^{\mathrm t} = \pm x
		\},
	\]
	the pairing is isomorphic to
	\(
		\mat(d, K) \times \Lambda \to \mat(d, K),\,
		(x, u) \mapsto x u
	\). This pairing is also non-degenerate because the sign
	\( - \) appears in
	\( \Lambda \) for the Tits index
	\( \up 1 {\mathsf D_{2 d, 2}^{(d)}} \) and in this case
	\( d \geq 2 \).
	
	Finally, if the Tits index is
	\( \up 2 {\mathsf A_{4 d - 1, 2}^{(d)}} \), then
	\(
		\mathfrak g_\beta
		\cong
		\mat(d, K) \times \mat(d, K)
	\),
	\( \mathfrak g_\alpha \cong \mat(d, K) \), and the pairing is isomorphic to
	\[
		(\mat(d, K) \times \mat(d, K))
		\times
		\mat(d, K)
		\to
		\mat(d, K) \times \mat(d, K),\,
		((x, y), u) \mapsto (x u, - u y).
		\qedhere
	\]
\end{proof}

\begin{lemma}
	\label{bc2-nondeg}
	Suppose that
	\( \Phi \) is of type
	\( \mathsf{BC}_2 \) and
	\( \alpha, \beta \in \Phi \) are orthogonal ultrashort roots. Then the Lie bracket
	\(
		\mathfrak g_\alpha
		\times
		\mathfrak g_\beta
		\to
		\mathfrak g_{\alpha + \beta}
	\) is non-degenerate.
\end{lemma}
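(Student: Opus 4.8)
The plan is to split into the classical and the exceptional cases, following the pattern of the two preceding lemmas. First I would pin down which Tits indices can produce \(\Phi = \mathsf{BC}_2\): from the list and the table these are \(\up2{\mathsf A_{n,2}^{(d)}}\) with \(4d < n+1\), \(\mathsf C_{n,2}^{(d)}\) with \(2d < n\), \(\up1{\mathsf D_{n,2}^{(d)}}\) with \(d>1\) and \(2d<n\), \(\up2{\mathsf D_{n,2}^{(d)}}\) with \(d>1\), together with the three exceptional indices \(\up2{\mathsf E_{6,2}^{16'}}\), \(\mathsf E_{7,2}^{31}\), \(\mathsf E_{8,2}^{66}\). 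After applying a relative Weyl element I may take \(\alpha = e_1\), \(\beta = e_2\) in the usual coordinates; then \(\alpha+\beta = e_1+e_2\) is short and is the only element of \((\mathbb N_+\alpha + \mathbb N_+\beta)\cap\Phi\), so \([G_\alpha, G_\beta] \leq G_{\alpha+\beta}\) and the \(K\)-bilinear bracket \(\mathfrak g_\alpha\times\mathfrak g_\beta\to\mathfrak g_{\alpha+\beta}\) is defined. The Weyl element exchanging \(e_1\) and \(e_2\) interchanges the two arguments and fixes \(\mathfrak g_{\alpha+\beta}\), so non-degeneracy in the first argument is equivalent to non-degeneracy in the second, and it suffices to treat one of them.

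For the classical indices I would use the explicit unitary description, exactly as in Lemma~\ref{c2-nondeg}. In each of these cases \(G\) is, modulo its center, one of \(\mathrm{SU}(A,h)\), \(\mathrm U(A,h)\), \(\mathrm{SU}(A,h,q)\), and the underlying module carries a Witt decomposition \(V = V_{\mathrm{hyp}}\perp V_0\) with \(V_{\mathrm{hyp}}\) hyperbolic of rank \(2r = 4\) over \(A\) and \(V_0\) the anisotropic kernel; since \(h\) is non-degenerate and \(V_{\mathrm{hyp}}\) is a non-degenerate orthogonal summand, the induced (anti)hermitian form \(h_0 = h|_{V_0}\) is non-degenerate. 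The canonical parametrizations \(t_{e_1}, t_{e_2}, t_{e_1+e_2}\) identify \(\mathfrak g_{e_1}\cong\mathfrak g_{e_2}\cong V_0\) and \(\mathfrak g_{e_1+e_2}\cong A\) in such a way that the bracket becomes a unit multiple of the sesquilinear pairing \((v,w)\mapsto h_0(v,w)\) of the anisotropic kernel (in the \(\mathrm{SU}(A,h,q)\) cases one reads the ultrashort–ultrashort bracket off the polar form of \(q\), using that \(\mathfrak g_{2e_i}\) sits in \(A\) as a form parameter). Non-degeneracy of \(h_0\) then gives the assertion; the only delicate point here is the precise normalization identifying the bracket with \(h_0\), after which non-degeneracy is immediate.

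For the three exceptional indices there is no such description, so I would pass to the split form, in the spirit of the crossed-out Lemma above. Over a faithfully flat étale extension \(K\subseteq\widetilde K\) we have \(F\colon G_{\widetilde K}\xrightarrow{\sim}\mathrm G(X^\vee,\widetilde\Phi^\vee,X,\widetilde\Phi;\widetilde K)\) with \(\widetilde\Phi\) of type \(\mathsf E_6\), \(\mathsf E_7\) or \(\mathsf E_8\), and \(\mathfrak g_{\alpha,\widetilde K} = \bigoplus_{\tilde\gamma\in u^{-1}(\alpha)}\mathfrak g_{\tilde\gamma}\) with free rank-one summands, and likewise for \(\beta\) and \(\alpha+\beta\). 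As \(\widetilde\Phi\) is simply laced all Chevalley structure constants are \(\pm1\), so \([e_{\tilde\gamma}, e_{\tilde\delta}] = \pm e_{\tilde\gamma+\tilde\delta}\) when \(\tilde\gamma+\tilde\delta\in\widetilde\Phi\) and \(0\) otherwise. It therefore suffices to show that every \(\tilde\delta\in u^{-1}(\beta)\) can be written \(\tilde\epsilon - \tilde\gamma\) with \(\tilde\gamma\in u^{-1}(\alpha)\) and \(\tilde\epsilon\in u^{-1}(\alpha+\beta)\): bracketing with \(e_{\tilde\gamma}\) then produces the coordinate \(\pm e_{\tilde\epsilon}\), and since \(\tilde\delta\) is recovered as \(\tilde\epsilon - \tilde\gamma\) the adjoint map \(\mathfrak g_\beta\to\mathrm{Hom}(\mathfrak g_\alpha,\mathfrak g_{\alpha+\beta})\) is a split monomorphism over \(\widetilde K\), hence a monomorphism over \(K\) by faithful flatness. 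The remaining content is the combinatorial inclusion \(u^{-1}(\beta)\subseteq u^{-1}(\alpha+\beta) - u^{-1}(\alpha)\), which I would verify directly on the weight diagrams of \(\widetilde\Phi\) relative to \(J\) and \(u^{-1}(0)\); here \(|u^{-1}(\alpha+\beta)|\) equals \(6\), \(8\), \(12\) in the three cases. This finite but somewhat intricate check on the \(\mathsf E_7\) and \(\mathsf E_8\) diagrams is the main obstacle; everything else is bookkeeping.
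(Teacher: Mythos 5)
Your proposal is correct and follows essentially the same route as the paper: in the classical cases the paper likewise reduces the claim to the non-degeneracy of the (anti)hermitian form on the defining representation (your restriction to the anisotropic kernel is exactly this), and for \(\up 2{\mathsf E_{6,2}^{16'}}\), \(\mathsf E_{7,2}^{31}\), \(\mathsf E_{8,2}^{66}\) the paper also appeals to a finite check on the root diagrams of \(\mathsf E_\ell\), which your criterion \(u^{-1}(\beta)\subseteq u^{-1}(\alpha+\beta)-u^{-1}(\alpha)\) makes precise. The extra detail you supply (the descent argument and the counts \(6\), \(8\), \(12\)) is consistent with the paper's data.
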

\begin{proof}
	In the classical cases fppf locally the pairing is isomorphic to the product map
	\[
		\mat(d, n, K) \times \mat(n, d, K)
		\to
		\mat(d, K)
	\]
	or to a pair of such maps. The exceptional cases
	\( \up 2 {\mathrm E_{6, 2}^{16'}} \),
	\( \mathrm E_{7, 2}^{31} \), and
	\( \mathsf E_{8, 2}^{66} \) can be checked using
	\cite{atlas}.
\end{proof}

\section{Some normalizers and centralizers}

\begin{theorem}
	\label{long-norm}
	Let
	\( G \) be an isotropic reductive group over
	\( K \) with root system
	\( \Phi \) of rank
	\( \geq 2 \). Let also
	\( \alpha \in \Phi \) be a long root and
	\( X \subseteq \mathfrak g_\alpha \) be a generating set of a
	\( K \)-module. Then the group subscheme
	\[
		N
		=
		\{
			g \in G
			\mid
			\forall x \in X \enskip
				\up g {t_\alpha(x)} \in G_\alpha
		\}
	\]
	coincides with the parabolic subgroup
	\(
		P
		=
		G^0_{
			\{
				\beta
				\mid
				\angle(\alpha, \beta) \leq \pi / 2
			\}
		}
	\).
\end{theorem}
\begin{proof}
	Clearly,
	\( P \leq N \) and we may assume that
	\( K \) is local. By lemma
	\ref{gauss} it suffices to check that
	\( G_\Sigma(K) \cap N(K) = 1 \), where
	\(
		\Sigma
		=
		\{
			\beta
			\mid
			\angle(\alpha, \beta) > \frac \pi 2
		\}
	\). Choose a linear map
	\( f \colon \mathbb R \Phi \to \mathbb R \) such that
	\(
		\Ker(f) \cap \Phi
		=
		\mathbb R \alpha \cap \Phi
	\) and let
	\(
		\Phi_s
		=
		\{
			\beta \in \Phi
			\mid
			\mathrm{sign}(f(\beta)) = s
		\}
	\),
	\( \Sigma_s = \Phi_s \cap \Sigma \) for
	\( s \in \{ {-}, 0, {+} \} \), so
	\( \Phi_{\pm} \) are unipotent subsets.

	Take any element
	\( g = g_{-} g_0 g_{+} \in G_\Sigma(K) \cap N(K) \) with
	\( g_s \in G_{\Sigma_s} \), then
	\(
		g_{\pm}
		=
		\prod_{
			\beta
			\in
			\Sigma_{\pm} \setminus 2 \Sigma_{\pm}
		}
			t_\beta(y_\beta)
	\) for any linear orders on
	\( \Sigma_{\pm} \setminus 2 \Sigma_{\pm} \) and
	\( g_0 \in U_{-\alpha}(K) \) (or
	\( g_0 \in U_{-\alpha / 2}(K) \) if
	\( \Phi \) is of type
	\( \mathsf{BC}_\ell \)). For any
	\( x \in X \) there is
	\( x' \in \mathfrak g_\alpha \) such that
	\( g\, t_\alpha(x) = t_\alpha(x')\, g \). Comparing both sides of
	\(
		g_{-}\,
		g_0\,
		t_\alpha(x)\,
		g_{+}^{t_\alpha(x)}
		=
		\up{t_\alpha(x')}{g_{-}}\,
		t_\alpha(x')\,
		g_0\,
		g_{+}
	\) we get
	\( [g_{+}, t_\alpha(x)] = 1 \) and
	\( [g_{-}, t_\alpha(x')] = 1 \). Since
	\( x \) runs over a generating set of
	\( \mathfrak g_\alpha \) and the commutator map
	\(
		\mathfrak g_\alpha
		\times
		\mathfrak g_\beta
		\to
		\mathfrak g_{\alpha + \beta}
	\) is non-degenerate on the second argument for
	\( \beta \in \Sigma_{\pm} \),
	\( g_{+} \) is trivial, so
	\( g_{-} \) is also trivial by changing the sign of
	\( f \).

	It follows that
	\( g = g_0 \in U_{-\alpha'}(K) \) for
	\( \alpha' \in \{ \alpha, \frac \alpha 2 \} \). Now suppose that
	\( \Phi \) is not of the type
	\( \mathsf{BC}_\ell \) and take a root
	\( \beta \in \Phi \) such that
	\( \frac \pi 2 < \angle(\alpha, \beta) < \pi \) and
	\( \beta \) is long if
	\( \Phi \) is of the type
	\( \mathsf G_2 \). Conjugate
	\( t_\beta(z) \) by both sides of
	\(
		g\, t_\alpha(x)
		=
		t_\alpha(x')\, g
	\). We get
	\( \bigl[ \log g, [x, z] \bigr] = 0 \), where
	\( \log g \in \mathfrak g_{- \alpha} \) is the element such that
	\( g = t_{- \alpha}(\log g) \). Since
	\(
		\mathfrak g_{\alpha + \beta}
		=
		[\mathfrak g_\alpha, \mathfrak g_\beta]
	\),
	\( [\log g, \mathfrak g_{\alpha + \beta}] = 0 \) and
	\( \log g = 0 \) (by lemma
	\ref{c2-nondeg} if
	\( \beta \) is short), i.e.
	\( g = 1 \).

	Finally, suppose that
	\( \Phi \) is of type
	\( \mathsf{BC}_\ell \). Take an ultrashort root
	\( \beta \) orthogonal to
	\( \alpha \) and conjugate
	\( t_\beta(u) \) by both sides of
	\( t_\alpha(x)\, g^{-1} = g^{-1}\, t_\alpha(x') \). We get
	\( [t_\alpha(x'), [g^{-1}, t_\beta(u)]] = 1 \), so
	\( [g^{-1}, t_\beta(u)] = 1 \) and
	\( g \in U_{-\alpha}(K) \) by lemma
	\ref{bc2-nondeg}. In other words, we reduce to the already considered case of
	\( \mathsf C_\ell \subseteq \mathsf{BC}_\ell \).
\end{proof}

We need another example of locally free
\( 2 \)-step nilpotent
\( K \)-modules. Let
\( G \) be an isotropic reductive group scheme over
\( K \) with root system
\( \Phi \) and
\( f \colon \mathbb R \Phi \to \mathbb R \) be a linear map such that
\( f(\Phi) \subseteq \{ -2, -1, 0, 1, 2 \} \), i.e. a ``%
\( 5 \)-grading''. Then
\[
	\bigl(
		G_{\Phi \cap f^{-1}(\{ 1, 2 \})},
		G_{\Phi \cap f^{-1}(2)}
	\bigr)
\]
is a sheaf of locally free
\( 2 \)-step nilpotent modules. The action of the group scheme
\( G^0_{\Phi \cap \Ker(f)} \) on it commutes with the operations of
\( 2 \)-step nilpotent modules, this can be easily checked by passing to a split form of
\( G \).

\begin{lemma}
	\label{urad-cent}
	Let
	\( G \) be an isotropic reductive group scheme and
	\( f \colon \mathbb R \Phi \to \mathbb R \) a linear map such that
	\( f(\Phi) \subseteq \{ -2, -1, 0, 1, 2 \} \) and
	\( \Phi \cap f^{-1}(2) \neq \varnothing \). Then
	\[
		C
		=
		\bigl\{
			g \in G^0_{\Phi \cap \Ker(f)}
			\mid
			[g, t_\alpha(x_{\alpha, i})] = 1
			\text{ for all }
			\alpha
			\text{ with }
			f(\alpha) > 0
		\bigr\}
	\]
	coincides with the scheme center
	\( \mathrm C(G) \leq \mathrm C(L) \),
	\( x_{\alpha, i} \in \mathfrak g_\alpha \) are
	\( K \)-module generators if
	\( \alpha \) is not ultrashort and
	\(
		x_{\alpha, i}
		\in
		\mathfrak g_\alpha
		\dotoplus
		\mathfrak g_{2 \alpha}
	\) generate the
	\( K \)-module
	\( \mathfrak g_\alpha \) otherwise.
\end{lemma}
\begin{proof}
	Clearly,
	\( C \) is the scheme centralizer of
	\( G_{\Phi_{> 0}} \) in
	\( G^0_{\Phi \cap \Ker(f)} \), where
	\(
		\Phi_{> 0}
		=
		\{ \alpha \in \Phi \mid f(\alpha) > 0 \}
	\). We can assume that
	\( G \) splits and
	\( \Phi = \widetilde \Phi \). Firstly, let us show that
	\( C \) is contained in the maximal torus
	\( T = L \). By theorem
	\ref{long-norm} and lemma
	\ref{subgr-int} the group scheme
	\( C \) is contained in
	\( G^0_\Sigma \), where
	\( \Sigma \) consists of the roots
	\( \alpha \in \Phi_0 \) such that
	\( \angle(\alpha, \beta) \leq \frac \pi 2 \) for all long roots
	\( \beta \in \Phi_{> 0} \). By considering rank
	\( 2 \) root subsystems it follows that roots of
	\( \Sigma \) are actually orthogonal to long roots from
	\( \Phi_{> 0} \).

	Now if the set of long roots of
	\( \Phi \) is an indecomposable root system of full rank, then the set of long roots from
	\( \Phi \setminus \Ker(f) \) generates this root subsystem e.g. by
	\cite[lemma 1]{loc-iso-ele}. In this case necessarily
	\( \Sigma = \varnothing \). Otherwise
	\( \Phi = \mathsf C_\ell \), possibly
	\( \mathsf B_2 = \mathsf C_2 \). Up to the choice of the base
	\( \Delta \subseteq \Phi \) any
	\( 5 \)-grading of
	\( \mathsf C_\ell \) maps basic roots to
	\( 0 \) except one, and this distinguished root is mapped to
	\( 1 \) (if it is short) or
	\( 2 \) (if it is long). It easily follows that
	\(
		\Sigma
		=
		\mathsf C_m
		\subseteq
		\mathsf C_\ell
	\) is a proper root subsystem generated by a connected Dynkin subdiagram containing the long basic root or it is empty. But the representation of
	\( \mathrm{Sp}(2 m, K) \) on
	\(
		G_{
			\{
				\alpha \in \mathsf C_{m + 1}
				\mid
				\alpha \cdot \mu > 0
			\}
		}
		/
		U_\mu
	\) is the defining representation of symplectic group scheme, where
	\( \mu \in \mathsf C_{m + 1} \) is the highest root. It follows that
	\( C \leq T \) in all cases.

	It remains to show that
	\( C \) trivially acts on all
	\( G_\alpha \). But all roots from
	\( \Phi_{> 0} \) are trivial on
	\( C \) (recall that roots are some homomorphisms
	\( T \to \mathbb G_{\mathrm m} \)) and such roots generate the root lattice
	\( \mathbb Z \Phi \).
\end{proof}

If
\( \Phi \) is of type
\( \mathsf{BC}_\ell \) with
\( \ell \geq 2 \), let
\(
	\mathrm C^{\mathrm{us}}(G)
	=
	\bigcap_{\alpha \in \mathsf C_\ell}
		\Cent_L(U_\alpha)
\) be the scheme centralizer of non-ultrashort root subgroups. This group scheme clearly contains the center and it is contained in
\( L \) by theorem
\ref{long-norm}.  It turns out that it is reductive or finite of multiplicative type, the second case holds only for
\( \mathsf E_{8, 2}^{66} \). The derived subgroup of
\( \mathrm C^{\mathrm{us}}(G) \) corresponds to the ``middle'' component of
\( u^{-1}(0) \) in the classical cases and to the component
\( \mathsf A_1 \) in the case
\( \mathsf E_{7, 2}^{31} \). More precisely, assuming that
\( G \) is split and adjoint connected, we have the following.

\begin{center}
	\begin{tabular}{|c|c|c|}
		\hline
		Tits index
		& \( \Cent^{\mathrm{us}}(G) \)
		& conditions
		\\ \hline \hline
		\( \up 2 {\mathsf A_{n, r}^{(d)}} \)
		& \( \mathbb{GL}_{n + 1 - 2 r d} \)
		& \( d \mid n + 1 \),
		\( 2 r d \leq n \),
		\( d \geq 1 \),
		\( r \geq 2 \)
		\\ \hline
		\( \mathsf C_{n, r}^{(d)} \)
		& \( \mathbb{S}\mathrm{p}_{2 n - 2 r d} \)
		& \( 2 \leq d = 2^k \mid 2 n \),
		\( r d \leq n - 1 \),
		\( r \geq 2 \)
		\\ \hline
		\( \up 1 {\mathsf D_{n, r}^{(d)}} \),
		\( \up 2 {\mathsf D_{n, r}^{(d)}} \)
		& \( \mathbb{SO}_{2 n - 2 r d} \)
		& \( r \geq 2 \),
		\( 2 \leq d = 2^k \mid 2 n \),
		\( r d \leq n - 1 \)
		\\ \hline
		\( \up 2 {\mathsf E_{6, 2}^{16'}} \)
		& \( \mathbb G_{\mathrm m} \)
		&
		\\ \hline
		\( \mathsf E_{7, 2}^{31} \)
		& \( \mathbb{SL}_2 \)
		&
		\\ \hline
		\( \mathsf E_{8, 2}^{66} \)
		& \( \mu_2 \)
		&
		\\ \hline
	\end{tabular}
\end{center}

In the classical cases the group scheme
\( \Cent^{\mathrm{us}}(G) \) can be computed using the block structure on a finite central extension of
\( G \) as a matrix group. In the exceptional cases we apply theorem
\ref{long-norm} to the splitting isotropic structure on
\( G \) and get
\( \Cent^{\mathrm{us}}(G) \leq T \) (for
\( \up 2 {\mathsf E_{6, 2}^{16'}} \) and
\( \mathsf E_{8, 2}^{66} \)) or
\(
	\mathbb{SL}_2
	\leq
	\Cent^{\mathrm{us}}(G)
	\leq
	T\, \mathbb{SL}_2
\), where
\( T \) is the maximal torus from the splitting isotropic structure. The intersection
\( T \cap \Cent^{\mathrm{us}}(G) \) is easy to compute using the root diagrams
\cite{atlas}.

For any root
\( \alpha \in \Phi \) let
\(
	\Gamma_\alpha
	=
	\{
		\beta \in \Phi
		\mid
		\alpha + \beta \notin \Phi \cup \{ 0 \}
	\}
\) and
\(
	Z_\alpha
	=
	\bigcup_{\beta \in \Gamma_\alpha}
		t_\beta(X_\beta)
	\subseteq
	G(K)
\) for some generating sets
\( X_\beta \subseteq \mathfrak g_\beta \) for non-ultrashort
\( \beta \) and generating sets
\(
	(X_\beta, X_{2 \beta})
	\subseteq
	(
		\mathfrak g_\beta
		\dotoplus
		\mathfrak g_{2 \beta},
		\mathfrak g_{2 \beta}
	)
\) for ultrashort
\( \beta \). It is easy to see by considering all rank
\( 2 \) and rank
\( 3 \) irreducible root systems that
\( \Gamma_\alpha \) is closed and
\( \angle(\alpha, \beta) \leq \frac \pi 2 \) for all
\( \beta \in \Gamma_\alpha \).

\begin{theorem}
	\label{dbl-centzer}
	Let
	\( K \) be a commutative ring,
	\( G \) an isotropic reductive group scheme over
	\( K \) with root system
	\( \Phi \) of rank at least
	\( 2 \), and
	\( \alpha \in \Phi \) a root.
	\begin{itemize}

		\item
		If the type of
		\( \Phi \) is
		\( \mathsf{BC}_\ell \) and
		\( \alpha \) is ultrashort, then
		\(
			\Cent_G(Z_\alpha)
			=
			\Cent^{\mathrm{us}}(G)\, U_\alpha
		\).

		\item
		If
		\( \alpha = \mathrm e_i + \mathrm e_j \) (using the convention
		\( \mathrm e_{-i} = -\mathrm e_i \)) is short and the type of
		\( \Phi \) is
		\( \mathsf C_\ell \) or
		\( \mathsf{BC}_\ell \), including
		\( \mathsf B_2 = \mathsf C_2 \), then
		\(
			\Cent_G(Z_\alpha)
			=
			\Cent(G)\,
			U_{2 \mathrm e_i}\,
			U_{\mathrm e_i + \mathrm e_j}\,
			U_{2 \mathrm e_j}
		\).

		\item
		Otherwise
		\( \Cent_G(Z_\alpha) = \Cent(G)\, U_\alpha \).

	\end{itemize}
\end{theorem}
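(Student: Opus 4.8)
The plan is to reduce to the case where $K$ is local and $G$ splits, and then to exploit the Gauss/Bruhat-type decomposition of Lemma~\ref{gauss} to pin down an arbitrary element of the double centralizer $\Cent_G(Z_\alpha)$. First I would observe that the inclusion ``$\supseteq$'' is immediate in all three cases: each group on the right-hand side consists of elements that commute with every $t_\beta(X_\beta)$ for $\beta\in\Gamma_\alpha$, which follows from the root-graded commutator relations (for $\beta\in\Gamma_\alpha$ the sum $\alpha+\beta\notin\Phi\cup\{0\}$, so $G_\alpha$ already centralizes $G_\beta$, and $\Cent(G)$, respectively $\Cent^{\mathrm{us}}(G)$, is central by definition). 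Since $\Gamma_\alpha$ is closed and $\Cent_G(Z_\alpha)$ is a closed subscheme of $G$, and since forming centralizers commutes with flat base change, it suffices to prove ``$\subseteq$'' fiberwise, hence we may assume $K$ is a field, even algebraically closed, so $G$ splits and $\Phi=\widetilde\Phi$.

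The main step is then to locate $\Cent_G(Z_\alpha)$ inside an explicit parabolic-type subgroup and strip it down by the Gauss decomposition. Choose a linear functional separating $\alpha$: a generic $\beta\in\Gamma_\alpha$ is a root with $\angle(\alpha,\beta)\le\pi/2$ failing the condition that $\alpha+\beta\in\Phi$, so in the simply-laced and long cases $\Gamma_\alpha=\{\beta:\angle(\alpha,\beta)\ge\pi/2\}\cup\{\text{roots orthogonal to }\alpha\text{ not summing to a root}\}$; a key combinatorial check (running over all rank $2$ and rank $3$ irreducible systems, exactly as was done to see $\Gamma_\alpha$ is closed) shows that $\mathbb Z\Gamma_\alpha$ together with the requirement of commuting with $G_{-\gamma}$ for the long $\gamma$ appearing forces the centralizing element into the standard Levi/unipotent piece attached to $\alpha$. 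Concretely, I would argue: conjugating $t_\alpha(x)$ by both sides of $g\,t_\beta(z)=t_\beta(z')\,g$ for suitable $\beta$ (and using Lemma~\ref{long-normzer}, Lemma~\ref{subgr-intsct}, and the non-degeneracy Lemmas~\ref{c2-nondeg} and \ref{bc2-nondeg}) kills the ``$G_{\pm}$'' components of the Gauss form of $g$, leaving $g\in G^0_{\Sigma_{\mathrm r}}$ for the root subsystem $\Sigma_{\mathrm r}$ of roots orthogonal to all of $\Gamma_\alpha$. One then identifies $\Sigma_{\mathrm r}$: in the generic case it is just $\mathbb Z\alpha\cap\Phi$, giving $g\in G_\alpha\cdot L$, and a further commutation argument (commuting with $t_{-\alpha}$, which is not in $Z_\alpha$, would be illegal, so instead commute with $G_\gamma$ for $\gamma$ with $\angle(\alpha,\gamma)=2\pi/3$ inside a rank-two subsystem, forcing the $L$-part into $\Cent(G)$) pushes the Levi part into the center; in the $\mathsf C_\ell/\mathsf{BC}_\ell$ short-root case $\Sigma_{\mathrm r}$ additionally contains $\pm 2\mathrm e_i,\pm 2\mathrm e_j$, yielding the three extra root subgroups; and in the ultrashort $\mathsf{BC}_\ell$ case the surviving non-ultrashort centralizing condition is exactly the definition of $\Cent^{\mathrm{us}}(G)$, giving $\Cent^{\mathrm{us}}(G)\,G_\alpha$.

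The hardest part, I expect, is the bookkeeping in the $\mathsf{BC}_\ell$ ultrashort case and in the borderline $\mathsf B_2=\mathsf C_2$ coincidence: here $\Gamma_\alpha$ behaves differently from the generic case because of the relations $\mathrm e_i+\mathrm e_j$, $2\mathrm e_i$, $\mathrm e_i-\mathrm e_j$ interacting, so one must very carefully enumerate which root subgroups survive in $\Sigma_{\mathrm r}$ and verify that the non-degeneracy lemmas genuinely apply to the relevant brackets (this is where Lemmas~\ref{c2-nondeg} and \ref{bc2-nondeg} are used essentially, and where the exceptional indices $\up 2{\mathsf E_{6,2}^{16'}}$, $\mathsf E_{7,2}^{31}$, $\mathsf E_{8,2}^{66}$ with their nonstandard $\Cent^{\mathrm{us}}(G)$ must be treated). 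Once the subscheme-theoretic containment $\Cent_G(Z_\alpha)\subseteq\Cent(G)\,G_\alpha$ (or the appropriate variant) is established over a field, the general statement follows by descent since both sides are smooth closed subschemes agreeing on all geometric fibers and the right-hand side is visibly contained in the left.
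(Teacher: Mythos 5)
Your overall strategy (easy inclusion $\supseteq$; trap $\Cent_G(Z_\alpha)$ inside a parabolic-type subgroup via Theorem \ref{long-normzer} and Lemma \ref{subgr-intsct}; kill the unipotent pieces with the Gauss decomposition and the non-degeneracy lemmas; then deal with the Levi part) matches the paper's. But there are two genuine gaps. First, the reduction to an algebraically closed field and the concluding ``descent'' step are not valid as stated: you assert that both sides are smooth closed subschemes, but smoothness (or even flatness) of the scheme-theoretic centralizer $\Cent_G(Z_\alpha)$ is not known a priori --- it is essentially the conclusion of the theorem --- and containment of closed subschemes cannot be checked on geometric fibers without such flatness (nilpotents and non-reduced centralizers in small characteristic are exactly the danger). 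The paper never passes to fibers; it reduces to a \emph{local base ring} (which is legitimate, since containment of closed subschemes localizes) and runs the Gauss decomposition of Lemma \ref{gauss} there.

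Second, the step that pushes the Levi part into $\Cent(G)$ (resp.\ $\Cent^{\mathrm{us}}(G)$) is where the real work lies, and your sketch (``commute with $G_\gamma$ for $\gamma$ at angle $2\pi/3$'') does not carry it. One must show that an element of $L$ commuting with the generators $t_\beta(X_\beta)$ for $\beta$ in the \emph{unipotent/parabolic} set $\Gamma_\alpha$ already centralizes \emph{all} root subgroups, including $G_{-\alpha}$ and the $G_\gamma$ with $\gamma\notin\Gamma_\alpha$. The paper isolates this as Lemma \ref{urad-centzer}: for a $5$-grading $f$, the centralizer of $G_{\Phi\cap f^{-1}(\{1,2\})}$ in $G^0_{\Phi\cap\Ker f}$ is $\Cent(G)$; its proof needs a separate argument when the long roots of $\Phi$ form a decomposable system (the $\mathsf C_\ell$ case, handled via the defining representation of $\symp_{2m}$) and concludes by noting that the roots vanishing on the centralizer generate the full root lattice. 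In the ultrashort $\mathsf{BC}_\ell$ case the identification of the Levi part with $\Cent^{\mathrm{us}}(G)$ is likewise not ``by definition'': $\Gamma_\alpha$ is only a parabolic subset of $\mathsf C_\ell\subseteq\mathsf{BC}_\ell$, so one must still upgrade from centralizing $t_\beta(X_\beta)$ for $\beta\in\Gamma_\alpha$ to centralizing every $G_\gamma$ with $\gamma\in\mathsf C_\ell$; the paper does this by applying Lemma \ref{urad-centzer} to the rank-one subgroups $G^0_{\{-\beta,\beta\}}$. Without an argument of this kind your proof establishes only $\Cent_G(Z_\alpha)\leq L\cdot G^0_\Sigma$, not the stated equalities.
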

\begin{proof}
	By theorem
	\ref{long-norm} and lemma
	\ref{subgr-int}
	\( \mathrm C_G(Z_\alpha) \leq G^0_\Sigma \), where
	\[
		\textstyle \Sigma
		=
		\{
			\gamma \in \Phi
			\mid
			\angle(\beta, \gamma) \leq \frac \pi 2
			\text{ for all long }
			\beta \in \Gamma_\alpha
		\}.
	\]
	Considering saturated root subsystems of rank
	\( 2 \) and
	\( 3 \) containing
	\( \alpha \) and arbitrary
	\( \beta \in \Sigma \) (or just explicitly calculating
	\( \Gamma_\alpha \) in all cases) we see that
	\begin{itemize}

		\item
		If
		\( \Phi \) is of type
		\( \mathsf C_\ell \) with
		\( \ell \geq 2 \) and
		\( \alpha = \mathrm e_i + \mathrm e_j \) is short (recall that
		\( \mathrm e_{-i} = -\mathrm e_i \)), then
		\(
			\Sigma
			=
			\{
				2 \mathrm e_i,
				\mathrm e_i + \mathrm e_j,
				2 \mathrm e_j
			\}
		\).

		\item
		If
		\( \Phi \) is of type
		\( \mathsf{BC}_\ell \) with
		\( \ell \geq 2 \) and
		\( \alpha \) is long, then
		\( \Sigma = \{ \alpha, \frac 1 2 \alpha \} \).

		\item
		If
		\( \Phi \) is of type
		\( \mathsf{BC}_\ell \) with
		\( \ell \geq 2 \) and
		\( \alpha = \mathrm e_i + \mathrm e_j \) is short, then
		\(
			\Sigma
			=
			\{
				\mathrm e_i,
				2 \mathrm e_i,
				\mathrm e_i + \mathrm e_j,
				\mathrm e_j,
				2 \mathrm e_j
			\}
		\).

		\item
		If
		\( \Phi \) is of type
		\( \mathsf{BC}_\ell \) with
		\( \ell \geq 2 \) and
		\( \alpha \) is ultrashort, then
		\( \Sigma = \{ \alpha, 2 \alpha \} \).

		\item
		Otherwise
		\( \Sigma = \{ \alpha \} \).

	\end{itemize}
	
	Now apply lemma
	\ref{bc2-nondeg} to get
	\[
		\Cent_G(Z_\alpha)
		\leq
		\begin{cases}
			G^0_{\{ \alpha, 2 \alpha \}}
			&
			\text{%
				if
				\( \Phi \) is of type
				\( \mathsf{BC}_\ell \),
				\( \ell \geq 2 \),
				\( \alpha \) is ultrashort%
			};
			\\
			G^0_{
				\{
					2 \mathrm e_i,
					\mathrm e_i + \mathrm e_j,
					2 \mathrm e_j
				\}
			}
			& \text{%
				if
				\( \Phi \) is of type
				\( \mathsf C_\ell \) or
				\( \mathsf{BC}_\ell \),
				\( \ell \geq 2 \),
				\(
					\alpha
					=
					\mathrm e_i + \mathrm e_j
				\) is short%
			};
			\\
			G^0_{\{ \alpha \}}
			&
			\text{otherwise}.
		\end{cases}
	\]

	It remains to show that
	\( \Cent_L(Z_\alpha) = \mathrm C(G) \) or
	\( \mathrm C^{\mathrm{us}}(G) \). If
	\( \alpha \) is neither ultrashort nor short in
	\( \Phi = \mathsf G_2 \), we are done by lemma
	\ref{urad-cent} applied to
	\(
		f(\beta)
		=
		2 \frac{\alpha \cdot \beta}{\alpha \cdot \alpha}
	\). In the case of
	\( \mathsf G_2 \) first apply the same argument to
	\( G_{\{ - \alpha, \alpha \}} \) to get
	\( \Cent_L(Z_\alpha) \leq \Cent(L) \) (using
	\( [L, L] \leq G_{\{ - \alpha, \alpha \}} \)), and then note that
	\( \Cent_{\Cent(L)}(Z_\alpha) = \Cent(G) \) because the preimage of
	\( \Gamma_\alpha \) in
	\( \widetilde \Phi \) spans the whole root lattice e.g. by considering root diagrams
	\cite{atlas}.
	
	It remains to consider
	\( \Phi = \mathsf{BC}_\ell \) and ultrashort
	\( \alpha \). Note that
	\( \Gamma_\alpha \) is a parabolic subset of
	\( \mathsf C_\ell \subseteq \mathsf{BC}_\ell \). Applying lemma
	\ref{urad-cent} to
	\( G_{\mathsf C_\ell} \) with
	\(
		f(\beta)
		=
		2 \frac{\beta \cdot \alpha}{\alpha \cdot \alpha}
	\) we see that
	\( \Cent_L(Z_\alpha) \) centralizes all
	\( U_\beta \) with
	\( \beta \in \mathsf C_\ell \) (note that
	\( G_{\mathsf C_\ell} \leq G^0_{\mathsf C_\ell} \) is a simple factor e.g. by
	\cite[lemma 1]{loc-iso-ele} and all other factors centralize
	\( G_{\mathsf C_\ell} \supset Z_\alpha \)), so
	\(
		\Cent_L(Z_\alpha)
		=
		\Cent^{\mathrm{us}}(G)
	\).
\end{proof}

\begin{theorem}
	\label{cent-norm}
	Let
	\( G \) an isotropic reductive group scheme over commutative ring
	\( K \) with root system
	\( \Phi \) of rank at least
	\( 2 \). Let also
	\( X_\alpha \subseteq \mathfrak g_\alpha \) be generating subsets for non-ultrashort
	\( \beta \) and generating sets
	\(
		(X_\alpha, X_{2 \alpha})
		\subseteq
		(
			\mathfrak g_\alpha
			\dotoplus
			\mathfrak g_{2 \alpha},
			\mathfrak g_{2 \alpha}
		)
	\) otherwise. Then
	\[
		\Cent_G\bigl(
			\bigcup_{\alpha \in \Phi}
				t_\alpha(X_\alpha)
		\bigr)
		=
		\Cent(G),
		\quad
		\{
			g \in G
			\mid
			\forall \alpha \in \Phi \enskip
				\up g {t_\alpha(X_\alpha)}
				\subseteq
				U_\alpha
		\}
		= L.
	\]
\end{theorem}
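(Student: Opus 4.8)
The plan is to prove both assertions simultaneously by reduction to the previous theorems, since the two conditions are closely linked: an element $g$ fixing every $t_\alpha(X_\alpha)$ under conjugation inside $G_\alpha$ is in particular in every normalizer set of Theorem~\ref{long-normzer}, while an element centralizing all $t_\alpha(X_\alpha)$ automatically satisfies the weaker ``into $G_\alpha$'' condition. So I would first establish the normalizer statement, then deduce the centralizer statement. For the normalizer claim, note $L \leq \{g \mid \forall\alpha\ \up g{t_\alpha(X_\alpha)}\subseteq G_\alpha\}$ is clear since $L$ normalizes each $G_\alpha$; conversely, given such a $g$, apply Theorem~\ref{long-normzer} at a long root $\alpha$ to get $g \in P = G^0_{\{\beta\mid\angle(\alpha,\beta)\le\pi/2\}}$, and intersect over all long roots $\alpha$. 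Using lemma~\ref{subgr-intsct}, this intersection is $G^0_\Sigma$ where $\Sigma$ is the set of roots making a non-obtuse angle with every long root of $\Phi$. The combinatorial claim, checked via rank-$2$ saturated subsystems exactly as in the proofs of lemmas~\ref{urad-centzer} and theorem~\ref{dbl-centzer}, is that $\Sigma = \varnothing$ whenever $\Phi$ is irreducible of rank $\ge 2$ and not of type $\mathsf C_\ell$ or $\mathsf{BC}_\ell$; in the $\mathsf C$/$\mathsf{BC}$ cases $\Sigma$ consists of the long and ultrashort roots, so one still has the ``into $G_\alpha$'' constraint at short roots $\alpha$ to exploit.

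For the $\mathsf C_\ell$ and $\mathsf{BC}_\ell$ cases I would argue as follows. After reducing to $K$ local and $G$ split (so $\Phi = \widetilde\Phi$, and $L = T$ a maximal torus up to the $\mathsf{BC}$ convention), we know $g \in G^0_\Sigma$ with $\Sigma$ the long-plus-ultrashort part of $\Phi$, i.e.\ $g \in G_{\Sigma_{\mathrm u}} \cdot T \cdot (\text{negative part})$ by a Gauss decomposition (lemma~\ref{gauss}). The remaining ``into $G_\alpha$'' conditions at short roots $\alpha = \mathrm e_i + \mathrm e_j$ then force the unipotent parts to collapse: conjugating $t_\alpha(x)$ by $g$ and comparing Gauss components, together with the non-degeneracy lemma~\ref{c2-nondeg} (the short-long pairing in $\mathsf C_2$) and lemma~\ref{bc2-nondeg} (the orthogonal ultrashort pairing in $\mathsf{BC}_2$), shows $g \in T$. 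Once $g \in T = L$, it automatically normalizes every $G_\alpha$, so $g$ lies in $L$ and we are done with the normalizer statement.

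Now for the centralizer statement: $\Cent(G) \subseteq \Cent_G\bigl(\bigcup_\alpha t_\alpha(X_\alpha)\bigr)$ is trivial. Conversely, let $g$ centralize all the $t_\alpha(X_\alpha)$. Then $g$ satisfies the normalizer condition, hence $g \in L$ by the first part. Now $g \in L$ centralizing $t_\alpha(X_\alpha)$ for every $\alpha$: conjugation by $g$ acts $K$-linearly (or by $2$-step module automorphisms, in the ultrashort case) on $\mathfrak g_\alpha$ fixing a generating set $X_\alpha$, hence acts trivially on all of $\mathfrak g_\alpha$ and $\mathfrak g_{2\alpha}$ — here I invoke the principle stated in the excerpt that a group scheme stabilizing a generating set of a locally free $2$-step nilpotent module acts trivially. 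Thus $g$ centralizes every $G_\alpha$, so the corresponding root characters $\alpha \colon T \to \mathbb G_{\mathrm m}$ are trivial on $g$; since the roots of the irreducible system $\Phi$ span the character lattice of $T/\Cent(G)$ (i.e.\ $\mathbb Z\Phi$ has finite index and, more to the point, $g$ acts trivially on the adjoint quotient), it follows that $g \in \Cent(G)$, exactly as in the final paragraph of the proof of lemma~\ref{urad-centzer}.

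The main obstacle I expect is the $\mathsf C_\ell$/$\mathsf{BC}_\ell$ bookkeeping: Theorem~\ref{long-normzer} alone does not kill $\Sigma$ there, and one must carefully set up the Gauss decomposition relative to a suitable $5$-grading and extract the vanishing of the unipotent parts from the short-root conjugation identities, keeping straight which factors commute and which non-degeneracy lemma applies where (short vs.\ ultrashort, $\mathsf C_2$ vs.\ $\mathsf{BC}_2$ rank-$2$ subsystems). The case $\mathsf B_2 = \mathsf C_2$ and the ultrashort subtleties of $\mathsf{BC}$-type, where $\Cent^{\mathrm{us}}(G)$ could a priori interfere, need particular care — but since here we impose conditions at \emph{all} roots, not just non-ultrashort ones, the ultrashort conditions suffice to cut $\Cent^{\mathrm{us}}(G)$ down to $\Cent(G)$.
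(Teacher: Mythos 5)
Your strategy for the normalizer claim is the paper's strategy (apply Theorem \ref{long-normzer} at the long roots and intersect via Lemma \ref{subgr-intsct}), but your key combinatorial assertion is wrong, and wrong in a way that sends you into an unnecessary and under-justified detour. You claim that the intersection $\Sigma$ of the sets $\{\beta \mid \angle(\alpha,\beta)\leq \pi/2\}$ over all long roots $\alpha$ equals the long-plus-ultrashort part of $\Phi$ in types $\mathsf C_\ell$ and $\mathsf{BC}_\ell$. In fact $\Sigma=\varnothing$ in \emph{every} type: for each long root $\gamma$ both $\gamma$ and $-\gamma$ are long, so $\beta\in\Sigma$ forces $\beta\perp\gamma$, and the long roots span $\mathbb R\Phi$ in every irreducible root system (in $\mathsf C_\ell$ and $\mathsf{BC}_\ell$ they are the $\pm 2\mathrm e_i$). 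Hence $\bigcap_{\alpha\text{ long}} P_\alpha = G^0_\varnothing = L$ immediately, and your entire second paragraph (the Gauss-decomposition analysis at short roots in the $\mathsf C$/$\mathsf{BC}$ cases) is not needed; it is also the vaguest part of your write-up, so as written the $\mathsf C$/$\mathsf{BC}$ case of the normalizer statement is not actually proved. The combinatorial subtlety you are remembering belongs to Lemma \ref{urad-centzer}, where one only has the long roots of a half-space $\Phi_{>0}$ at one's disposal; here you have all of them.

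For the centralizer claim you take a genuinely different route from the paper (which simply intersects $\Cent_G(Z_\alpha)=\Cent(G)\,G_\alpha$ and $\Cent_G(Z_{-\alpha})=\Cent(G)\,G_{-\alpha}$ from Theorem \ref{dbl-centzer} for a long root $\alpha$, using Lemma \ref{subgr-intsct} to see that $G_\alpha\cap G_{-\alpha}=1$). Your route --- normalizer statement first, then triviality of the conjugation action on each $\mathfrak g_\alpha$ --- can be made to work, but it has a gap at the last step: from ``$g\in L$ centralizes every $G_\alpha$'' you pass to ``the root characters $\alpha\colon T\to\mathbb G_{\mathrm m}$ are trivial on $g$,'' which tacitly assumes $g$ lies in a maximal torus. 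The subgroup $L$ is a reductive subsystem subgroup, not a torus in general, and the main content of Lemma \ref{urad-centzer} is precisely the reduction of its centralizer subscheme to $T$ after a splitting extension (including the special $\mathsf C_\ell$ case); you only invoke the easy final paragraph of that lemma. Citing Lemma \ref{urad-centzer} in full (or Theorem \ref{dbl-centzer}, as the paper does) closes this gap, but as stated your argument skips the hard part.
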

\begin{proof}
	The first claim follows from theorem
	\ref{dbl-centzer} applied to opposite long roots and lemma
	\ref{subgr-int}. The second one is a corollary of theorem
	\ref{long-norm} applied to all long roots and lemma
	\ref{subgr-int}.
\end{proof}

\section{Main results}

Recall that
\(
	\mathrm E_G(K)
	=
	\bigl\langle
		U_\alpha(K)
		\mid
		\alpha \in \Phi
	\bigr\rangle
\) is the
\textit{elementary subgroup}
\cite{iso-ele-nor, loc-iso-ele}. If
\( \mathrm E_G(K) \leq H \leq G(K) \) is an intermediate subgroup, then
\(
	\Cent(G(K)) \cap H
	=
	\Cent(H)
	=
	\Cent_H(\mathrm E_G(K))
\) by theorem
\ref{cent-norm}.

\begin{theorem}
	\label{diophantine}
	Let
	\( G \) be an isotropic reductive group scheme over
	\( K \) such that the rank of
	\( \Phi \) is at least
	\( 2 \). Then the sets
	\( L(K) \),
	\( \Cent(G)(K) \), and all
	\( \Cent(G)(K)\, U_\alpha(K) \) are Diophantine in the group
	\( G(K) \). Moreover, the root subgroups
	\( U_\alpha(K) \) are Diophantine in the group
	\( G(K) \) unless the type is
	\( \mathsf B_2 = \mathsf C_2 \),
	\( \mathsf{BC}_2 \), or
	\( \alpha \) is short and the type is
	\( \mathsf G_2 \). If
	\( \mathrm E_G(K) \leq H \leq G(K) \) is an intermediate subgroup, then the intersections of all above subgroups with
	\( H \) are Diophantine in
	\( H \).
\end{theorem}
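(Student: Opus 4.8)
The plan is to realise each set as the image, under projection away from some auxiliary variables, of the solution set of a finite system of equations in \(G(K)\), with parameters a fixed finite family of distinguished elements. Since every \(\mathfrak g_\alpha\) (resp.\ \(\mathfrak g_\alpha \dotoplus \mathfrak g_{2\alpha}\)) is a finitely generated \(K\)-module, the generating sets \(X_\alpha\) may be taken finite; then the elements \(t_\alpha(x)\) for \(x \in X_\alpha\) together with the Weyl elements \(n_\alpha\) form a finite parameter tuple. Granting parameters, the relations \(gs = sg\) and \(\up g h = h'\) are single equations, and the class of Diophantine subsets of \(G(K)\) is closed under finite intersection, under conjugation by a fixed element, and under taking the image of a word map — in particular a product or a commutator — applied to Diophantine sets already constructed; these are the only ingredients used below.

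I first deal with \(\Cent(G)(K)\) and the ``generic'' translated root subgroups. By theorem~\ref{cent-norm} the scheme centre \(\Cent(G)\) is the scheme centraliser of the finite set \(S = \bigcup_\alpha t_\alpha(X_\alpha)\), so \(\Cent(G)(K) = \{g : gs = sg \text{ for all } s \in S\}\) is Diophantine. For a long root \(\alpha\), and more generally for every root in the ``otherwise'' case of theorem~\ref{dbl-centzer} (all long roots, and all short roots outside types \(\mathsf C_\ell\) and \(\mathsf{BC}_\ell\)), that theorem gives \(\Cent_G(Z_\alpha) = \Cent(G)\,G_\alpha\); since \(\Cent(G) \cap G_\alpha = 1\), one has \((\Cent(G)\,G_\alpha)(K) = \Cent(G)(K)\,G_\alpha(K)\), which is therefore \(\Cent_{G(K)}(Z_\alpha)\) and Diophantine. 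For the remaining cases (short roots \(\mathrm e_i + \mathrm e_j\) of types \(\mathsf C_\ell\), \(\mathsf{BC}_\ell\) and ultrashort roots of \(\mathsf{BC}_\ell\)) the centraliser \(\Cent_{G(K)}(Z_\alpha)\) is strictly larger, and one trims it by intersecting with the already-available sets \(\Cent(G)(K)\,G_\beta(K)\) for the long roots \(\beta\) appearing in the product of theorem~\ref{dbl-centzer}, together with suitable Weyl translates, cancelling the extra factors by the uniqueness of factorisations in unipotent subgroups (lemma~\ref{subgr-intsct}); once \(G_\alpha(K)\) itself is known this set is just the product \(\Cent(G)(K)\cdot G_\alpha(K)\). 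Finally, applying theorem~\ref{long-normzer} to all long roots and lemma~\ref{subgr-intsct}, and noting that — since the long roots span the ambient space — no root \(\beta\) satisfies \(\angle(\alpha,\beta) \le \pi/2\) for every long \(\alpha\), one gets \(\bigcap_{\alpha \text{ long}} G^0_{\{\beta : \angle(\alpha,\beta) \le \pi/2\}} = G^0_\varnothing = L\), hence \(L(K) = \{g : \up g{t_\alpha(x)} \in G_\alpha(K) \text{ for all long } \alpha,\ x \in X_\alpha\}\); this is Diophantine as soon as the long root subgroups are.

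It remains to show that the root subgroups \(G_\alpha(K)\) themselves are Diophantine in the stated cases, and here the key point is that commutators are insensitive to central factors: \([zu, z'v] = [u,v]\) for central \(z, z'\). Thus, if \(\alpha = \gamma + \delta\) with \(\langle \gamma, \delta\rangle\) of type \(\mathsf A_2\), the Chevalley commutator formula reduces to a single bilinear term and the set \(\{[a,b] : a \in \Cent(G)(K)\,G_\gamma(K),\ b \in \Cent(G)(K)\,G_\delta(K)\}\) equals \(t_\alpha(\Image c_{\gamma\delta})\), where \(c_{\gamma\delta}\colon \mathfrak g_\gamma \times \mathfrak g_\delta \to \mathfrak g_\alpha\) is the bracket; the root-graded structure, and the surjectivity of the relevant module multiplications (this is where the form-theoretic descriptions of the classical types and the non-degeneracy lemmas~\ref{c2-nondeg}, \ref{bc2-nondeg} for the small exceptional relative systems are needed), make \(c_{\gamma\delta}\) onto, so this set equals \(G_\alpha(K)\). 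For simply-laced \(\Phi\) every root is such a sum of two ``generic'' roots, giving all root subgroups at once; for \(\mathsf B_\ell\), \(\mathsf F_4\), \(\mathsf G_2\), \(\mathsf{BC}_\ell\) the long root subgroups are obtained the same way inside the long subsystem. For the shorter roots the commutator \([G_\gamma, G_\delta]\) also lands in a product \(G_{\mathrm{hi}}\) of strictly longer, already-treated root subgroups, so \(\{[a,b]\,w : w \in G_{\mathrm{hi}}(K)\} = G_\alpha(K)\,G_{\mathrm{hi}}(K)\); intersecting two such descriptions coming from two different decompositions of \(\alpha\) and cancelling via lemma~\ref{subgr-intsct} leaves exactly \(G_\alpha(K)\). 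The excluded types \(\mathsf B_2 = \mathsf C_2\), \(\mathsf{BC}_2\), and the short roots of \(\mathsf G_2\) are precisely those where no second such decomposition, and no larger root subsystem, is available.

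The hard part is the last step: one must verify, type by type along the list of Tits indices, that the bilinear brackets \(c_{\gamma\delta}\) really are surjective on the twisted modules \(\mathfrak g_\alpha\) — for relative types \(\mathsf C_2\), \(\mathsf{BC}_2\) this is exactly lemmas~\ref{c2-nondeg}, \ref{bc2-nondeg}, and for the classical families it follows from non-degeneracy of the defining hermitian or alternating forms — and that in each admissible type there really are enough decompositions of every short and ultrashort root for the multi-term commutator debris to cancel, which is also what cuts out the exceptional list.
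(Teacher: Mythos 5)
Your overall strategy matches the paper's: get \(\Cent(G)(K)\,G_\alpha(K)\) from theorem \ref{dbl-centzer} as centralizers of the finite sets \(Z_\alpha\), extract \(G_\alpha(K)\) by commutator identities, and get \(L(K)\) from theorem \ref{long-normzer}. But there are two genuine gaps, both concentrated exactly where the theorem is delicate, namely the rank-two symplectic-like types. First, your formula \(L(K) = \{g \mid \up g{t_\alpha(x)} \in G_\alpha(K) \text{ for all long } \alpha\}\) is only Diophantine ``as soon as the long root subgroups are'' --- but for types \(\mathsf C_2\) and \(\mathsf{BC}_2\) the long root subgroups \(G_{2\mathrm e_i}(K)\) are precisely among the sets the theorem does \emph{not} claim to be Diophantine, while it still claims \(L(K)\) is. The fix is to apply theorem \ref{long-normzer} to \(G/\Cent(G)\) and use the condition \(\up g{t_\alpha(X)} \subseteq \Cent(G)(K)\,G_\alpha(K)\) instead, which is what the paper does. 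Second, for \(\mathsf C_2\) and \(\mathsf{BC}_2\) theorem \ref{dbl-centzer} only hands you \(\Cent^{\mathrm{us}}(G)(K)\,G_\alpha(K)\) (ultrashort \(\alpha\)) and the triple product \(\Cent(G)(K)\,G_{2\mathrm e_i}(K)\,G_{\mathrm e_i+\mathrm e_j}(K)\,G_{2\mathrm e_j}(K)\) (short \(\alpha\)); the theorem nevertheless asserts that \(\Cent(G)(K)\,G_\alpha(K)\) itself is Diophantine for \emph{every} root. Your proposed ``trimming by intersecting with Weyl translates'' does not produce these in rank two: there is no third index \(k\), so the double-commutator cancellation available for \(\ell \geq 3\) is absent. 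The paper needs a separate argument here --- \(\Cent(G)(K)\,G_{\mathrm e_i}(K)\) is cut out of \(\Cent^{\mathrm{us}}(G)(K)\,G_{\mathrm e_i}(K)\) by the condition \([g, t_{\mathrm e_i}(X_0)] \subseteq \Cent(G)(K)\,G_{2\mathrm e_i}(K)\), and \(\Cent(G)(K)\,G_{2\mathrm e_i}(K)\,G_{\mathrm e_i+\mathrm e_j}(K)\) is expressed as a product of commutators over a finite generating set of \(\mathfrak g_{\mathrm e_i - \mathrm e_j}\) using lemma \ref{c2-nondeg} --- and nothing in your sketch replaces it.

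A smaller point: where the paper writes \(G_\alpha(K) = [\Cent(G)(K)\,G_\beta(K),\, t_{\alpha-\beta}(e)]\) with \(e\) a fixed factor of a Weyl element, the map \(y \mapsto [y,e]\) is a surjective linear map, so equality is immediate. Your variant takes the set of \emph{all} commutators \([a,b]\) with both entries ranging over translated root subgroups, which requires the bracket \(\mathfrak g_\gamma \times \mathfrak g_\delta \to \mathfrak g_\alpha\) to be surjective as a map of sets, not merely to have image generating \(\mathfrak g_\alpha\) as a module; you acknowledge this needs a type-by-type verification but do not carry it out. Fixing one argument to be a Weyl-element factor, as the paper does, removes this issue entirely.
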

\begin{proof}
	By theorem
	\ref{dbl-centzer} the following subgroups are Diophantine.
	\begin{itemize}

		\item
		\( \Cent^{\mathrm{us}}(G)(K)\, U_\alpha(K) \) for ultrashort
		\( \alpha \) and
		\( \Phi \) of type
		\( \mathsf{BC}_\ell \);

		\item
		\(
			\Cent(G)(K)\,
			U_{2 \mathrm e_i}(K)\,
			U_{\mathrm e_i + \mathrm e_j}(K)\,
			U_{2 \mathrm e_j}(K)
		\) for
		\( \alpha = \mathrm e_i + \mathrm e_j \) and
		\( \Phi \) of type
		\( \mathsf C_\ell \) or
		\( \mathsf{BC}_\ell \),
		\( \ell \geq 2 \);

		\item
		\( \mathrm C(G)(K)\, U_\alpha(K) \) for
		\( \alpha \) long or
		\( \alpha \) short and
		\( \Phi \) of type
		\( \mathsf B_\ell \),
		\( \ell \geq 3 \),
		\( \mathsf F_4 \), or
		\( \mathsf G_2 \).

	\end{itemize}
	Intersections of Diophantine subgroups, their products and their commutators with single elements are also Diophantine.

	Clearly,
	\[
		\Cent(G)(K)
		=
		\Cent(G)(K)\, U_\alpha(K)
		\cap
		\Cent(G)(K)\, U_{-\alpha}(K)
	\]
	is Diophantine, where
	\( \alpha \) is any long root. By theorem
	\ref{long-norm} applied to
	\( G / \Cent(G) \) we have
	\[
		\{
			g \in G(K)
			\mid
			\up g {t_\alpha(X)}
			\subseteq
			\Cent(G)(K)\, U_\alpha(K)
		\}
		=
		G^0_{
			\{
				\beta
				\mid
				\angle(\alpha, \beta) \leq \pi / 2
			\}
		}(K)
	\]
	for every long root
	\( \alpha \), where
	\( X \subseteq \mathfrak g_\alpha \) is a finite generating set. The intersection of all these parabolic subgroups is
	\( L(K) \) by lemma
	\ref{subgr-int}.

	If
	\( \alpha \) is long and the type of
	\( \Phi \) is neither
	\( \mathsf C_\ell \) nor
	\( \mathsf{BC}_\ell \), then
	\[
		U_\alpha(K)
		=
		\prod_{x \in X}
			\bigl[
				\Cent(G)(K)\, U_\beta(K),
				t_{\alpha - \beta}(x)
			\bigr]
	\]
	is Diophantine, where
	\( \beta \) is long,
	\( \angle(\alpha, \beta) = \frac \pi 3 \), and
	\( X \subseteq \mathfrak g_{\alpha - \beta} \) is a finite generating set.

	If
	\( \alpha = \mathrm e_i \) is short and the type of
	\( \Phi \) is
	\( \mathsf B_\ell \),
	\( \ell \geq 3 \), then
	\[
		U_\alpha(K)
		=
		\Cent(G)(K)\, U_\alpha(K)
		\cap
		G_{\mathrm e_i + \mathrm e_j}(K)\,
		\prod_{x \in X}
			\bigl[
				\Cent(G)(K)\, G_{\mathrm e_j}(K),
				t_{\mathrm e_i - \mathrm e_j}(x)
			\bigr]
	\]
	is Diophantine, where
	\( j \neq \pm i \) is another index and
	\(
		X
		\subseteq
		\mathfrak g_{\mathrm e_i - \mathrm e_j}
	\) is a finite generating set. It follows that all root subgroups are Diophantine for the type
	\( \mathsf F_4 \).

	Now suppose that
	\( \Phi \) is of type
	\( \mathsf C_\ell \) or
	\( \mathsf{BC}_\ell \) and
	\( \ell \geq 3 \). We have
	\[
		U_{\mathrm e_i + \mathrm e_j}(K)
		=
		\prod_{x \in X,\, x' \in X'}
			\bigl[
				\bigl[
					\Cent(G)(K)\,
					U_{2 \mathrm e_j}(K)\,
					U_{\mathrm e_j + \mathrm e_k}(K)\,
					U_{2 \mathrm e_k}(K),
					t_{\mathrm e_i - \mathrm e_j}(x)
				\bigr],
				t_{\mathrm e_j - \mathrm e_k}(x')
			\bigr],
	\]
	where
	\( k \notin \{ - i, i, - j, j \} \) is a new index and
	\(
		X
		\subseteq
		\mathfrak g_{\mathrm e_i - \mathrm e_j}
	\),
	\(
		X'
		\subseteq
		\mathfrak g_{\mathrm e_j - \mathrm e_k}
	\) are finite generating sets. Next, the group
	\[
		U_{2 \mathrm e_i}(K)
		=
		\Cent(G)\, U_{2 \mathrm e_i}(K)
		\cap
		U_{\mathrm e_i + \mathrm e_j}(K)\,
		\prod_{x \in X}
			\bigl[
				\Cent(G)\, U_{2 \mathrm e_j}(K),
				t_{\mathrm e_i - \mathrm e_j}(x)
			]
	\]
	is Diophantine, where
	\(
		X
		\subseteq
		\mathfrak g_{\mathrm e_i - \mathrm e_j}
	\) is a finite generating set. If
	\( \Phi \) is of type
	\( \mathsf{BC}_\ell \), then
	\[
		U_{\mathrm e_i}(K)
		=
		\Cent^{\mathrm{us}}(G)(K)\,
		U_{\mathrm e_i}(K)
		\cap
		U_{\mathrm e_i - \mathrm e_j}(K)\,
		U_{\mathrm e_i + \mathrm e_j}(K)\,
		\prod_{x \in X}
			\bigl[
				\Cent^{\mathrm{us}}(G)(K)\,
				U_{\mathrm e_j}(K),
				t_{\mathrm e_i - \mathrm e_j}(x)
			\bigr]
	\]
	is also Diophantine, where
	\(
		X
		\subseteq
		\mathfrak g_{\mathrm e_i - \mathrm e_j}
	\) is a finite generating set.

	Finally, consider the exceptional cases
	\( \mathsf C_2 \) and
	\( \mathsf{BC}_2 \). The group
	\( \Cent(G)(K)\, U_{\mathrm e_i}(K) \) is Diophantine (in the case of
	\( \mathsf{BC}_2 \)) because it contains
	\(
		g
		\in
		\Cent^{\mathrm{us}}(K)\, U_{\mathrm e_i}(K)
	\) if and only if
	\[
		\bigl[ g, t_{\mathrm e_i}(x) \bigr]
		\in
		\Cent(G)(K)\, U_{2 \mathrm e_i}(K)
	\]
	for all
	\( x \in X \), where
	\(
		X
		\subseteq
		\mathfrak g_{\mathrm e_i}
		\dotoplus
		\mathfrak g_{2 \mathrm e_i}
	\) is a finite generating set. We have
	\[
		\Cent(G)\,
		U_{2 \mathrm e_i}(K)\,
		U_{\mathrm e_i + \mathrm e_j}(K)
		=
		\Cent(G)\,
		U_{2 \mathrm e_i}(K)\,
		\prod_{x \in X}
			\bigl[
				\Cent(G)\,
				U_{2 \mathrm e_i}(K)\,
				U_{\mathrm e_i + \mathrm e_j}(K)\,
				U_{2 \mathrm e_j}(K),
				t_{\mathrm e_i - \mathrm e_j}(x)
			\bigr]
	\]
	for a finite generating set
	\(
		X
		\subseteq
		\mathfrak g_{\mathrm e_i - \mathrm e_j}
	\) by lemma
	\ref{c2-nondeg}, so
	\( \Cent(G)\, U_{\mathrm e_i + \mathrm e_j}(K) \) is also Diophantine.
	
	The last claim about intersections with
	\( \mathrm E_G(K) \leq H \leq G(K) \) easily follows from explicit Diophantine formulas for
	\( L(K) \),
	\( \Cent(G)(K) \),
	\( \Cent(G)(K)\, U_\alpha(K) \), and
	\( U_\alpha(K) \).
\end{proof}

Since
\( G \) is a finitely presented affine scheme, the point group
\( G(K) \) is e-interpretable in
\( K \). The following theorem shows the converse.

\begin{theorem}
	\label{e-interpret}
	Let
	\( G \) be an isotropic reductive group scheme over
	\( K \) with the root system
	\( \Phi \) of rank at least
	\( 2 \). Then the ring
	\( K \) is e-interpretable in any intermediate subgroup
	\( \mathrm E_G(K) \leq H \leq G(K) \).
\end{theorem}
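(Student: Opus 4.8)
The plan is to follow the strategy of Bunina, Myasnikov and Plotkin \cite{dioph-chev}: starting from the Diophantine root data supplied by Theorem \ref{diophantine}, reconstruct inside \(G(K)\) first a coordinate algebra and then the ring \(K\) itself as its center (or, in the presence of an involution, as the fixed subring of that involution). First I would fix a long root \(\alpha\in\Phi\). By Theorem \ref{diophantine} the central subgroup \(\Cent(G)(K)\) and the subgroup \(\Cent(G)(K)\,G_\alpha(K)\) are Diophantine in \(G(K)\); since \(\Cent(G)\) is of multiplicative type while \(G_\alpha\) is unipotent, \(\Cent(G)(K)\cap G_\alpha(K)=1\), so the quotient group \(V_\alpha=\Cent(G)(K)\,G_\alpha(K)/\Cent(G)(K)\) is e-interpretable in \(G(K)\) and \(t_\alpha\) identifies it with the abelian group \((\mathfrak g_\alpha,\dotplus)\) (recall \(2\alpha\notin\Phi\)); for every type other than \(\mathsf B_2=\mathsf C_2\) and \(\mathsf{BC}_2\) one may simply use the already Diophantine group \(G_\alpha(K)\). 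Doing this for finitely many roots, and recalling that \(L(K)\) is Diophantine and that Weyl elements \(n_\beta\in G_\beta(K)\,G_{-\beta}(K)\,G_\beta(K)\) may be used as parameters, I obtain e-interpretations of the abelian groups \(V_\beta\cong\mathfrak g_\beta\). Moreover, whenever \(\beta,\gamma\in\Phi\) satisfy \(\beta+\gamma=\alpha\) and \(2\beta+\gamma,\,\beta+2\gamma\notin\Phi\cup\{0\}\), the identity \([t_\beta(x),t_\gamma(y)]=t_\alpha(\langle x,y\rangle)\) defines a biadditive map \(\langle{-},{=}\rangle\colon\mathfrak g_\beta\times\mathfrak g_\gamma\to\mathfrak g_\alpha\); since this commutator lies in \(G_\alpha(K)\), which meets \(\Cent(G)(K)\) trivially, its graph lifts to a Diophantine relation on \(V_\beta,V_\gamma,V_\alpha\) (when unavoidable higher-height terms occur one instead projects onto the \(G_\alpha\)-layer, the quotient by a Diophantine normal subgroup).

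Next I would assemble a coordinate algebra. Being irreducible of rank \(\ge 2\), \(\Phi\) contains a subsystem of type \(\mathsf A_2\) all of whose roots have the same length and, by the classification of Section 2, the same \'etale-local root datum: long roots when \(\Phi\) is not of type \(\mathsf C_\ell\) or \(\mathsf{BC}_\ell\) (for \(\mathsf B_\ell,\mathsf F_4,\mathsf G_2,\mathsf E_\ell\) the structure constants on such an \(\mathsf A_2\) are units, because the relevant root differences are too long to lie in \(\Phi\)), and short roots of the \(\mathsf A_{\ell-1}\) spanned by the \(\mathrm e_i-\mathrm e_j\) when \(\Phi\) is of type \(\mathsf C_\ell\) or \(\mathsf{BC}_\ell\) with \(\ell\ge 3\). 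For a corresponding triple \(\beta,\gamma,\alpha=\beta+\gamma\), after conjugating \(V_\beta\) and \(V_\gamma\) onto \(V_\alpha\) by Weyl elements and renormalizing by a unit, the map \(\langle{-},{=}\rangle\) becomes the multiplication of a unital \(K\)-algebra \(R\) with \(\mathfrak g_\alpha\cong R\): either \(R=K\) (the split case and every case with one-dimensional root spaces, e.g. \(\mathsf B_\ell\) and \(\mathsf G_2\) along their long roots); or \(R\) an Azumaya \(K\)-algebra (type \(\mathsf A\) along the \(\mathrm e_i-\mathrm e_j\); the short roots in types \(\mathsf C,\mathsf D\)); or an Azumaya algebra over a quadratic \'etale extension \(K'\supseteq K\) (type \(\up 2{\mathsf A}\), via short roots); or a central nonassociative algebra (the exceptional cases where \(\mathfrak g_\alpha\) is \(8\)-dimensional, such as \(\up 1{\mathsf E_{6,2}^{28}}\) and \(\mathsf E_{7,3}^{28}\), where \(\langle{-},{=}\rangle\) is an octonion product). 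Together with the additive group of \(V_\alpha\) this e-interprets the (possibly noncommutative, in some cases nonassociative) algebra \(R\). Finally \(K\) is recovered inside \(R\) as the center \(Z(R)\), or as \(Z(R)=K'\) followed by the fixed subring of the canonical involution of \(K'\) in type \(\up 2{\mathsf A}\) (that involution being visible through a Weyl element \(n_\alpha\), which conjugates \(V_\alpha\) onto \(V_{-\alpha}\) and so induces the conjugation of \(R\)); and this subring is Diophantine in \(R\) because \(R\) is finitely generated projective over \(K\) --- fixing module generators \(b_1,\dots,b_m\) of \(R\) and naming the \(t_\alpha(b_i)\in G(K)\) as parameters, \(Z(R)\) is cut out of \(R\) by the finitely many identities expressing commutativity and associativity against the \(b_i\). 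Thus \(K\), with its ring operations, is e-interpretable in \(G(K)\).

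The step I expect to be the main obstacle is the two small types \(\mathsf B_2=\mathsf C_2\) and \(\mathsf{BC}_2\). There the shortcut above is unavailable --- \(\Phi\) has no \(\mathsf A_2\)-subsystem of equal-length roots, and Theorem \ref{diophantine} provides no Diophantine individual root subgroup --- so the whole construction must be carried out modulo \(\Cent(G)(K)\), starting from the ``mixed'' Diophantine sets \(\Cent(G)(K)\,G_{\mathrm e_i+\mathrm e_j}(K)\) and \(\Cent(G)(K)\,G_{2\mathrm e_i}(K)\,G_{\mathrm e_i+\mathrm e_j}(K)\,G_{2\mathrm e_j}(K)\) of that theorem, and exploiting the clean commutator \(\mathfrak g_{\mathrm e_i-\mathrm e_j}\times\mathfrak g_{\mathrm e_i+\mathrm e_j}\to\mathfrak g_{2\mathrm e_i}\) together with the non-degeneracy Lemmas \ref{c2-nondeg} and \ref{bc2-nondeg}. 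One must check that passing to the quotient by \(\Cent(G)(K)\) destroys neither the additive groups \(\mathfrak g_\alpha\) (it does not, since \(\Cent(G)(K)\cap G_\alpha(K)=1\)) nor the commutator maps (it does not, since the relevant commutators already lie in root subgroups), and that enough structure survives to pin down \(K\) --- either as the center of the short-root algebra, or directly when some root space is already \(\cong K\). Verifying this case by case over the Tits indices with \(\Phi\in\{\mathsf C_2,\mathsf{BC}_2\}\), and likewise checking in the non-split cases that the Chevalley structure constants genuinely reassemble into the stated algebra \(R\) with center \(K\), is the technical heart of the argument.
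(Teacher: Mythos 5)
Your route is genuinely different from the paper's. You reconstruct a coordinate algebra \(R \cong \mathfrak g_\alpha\) from the commutator pairing on an equal-length \(\mathsf A_2\)-subsystem and then recover \(K\) as the center of \(R\) (or the fixed ring of an involution in the unitary case); this is close in spirit to the split Chevalley argument of Bunina--Myasnikov--Plotkin. The paper instead interprets \(K\) directly as a kind of centroid of the grading: an element of the interpreted ring is a whole family \(k_\alpha \colon Y_\alpha \to \Cent(G)(K)\,G_\alpha(K)/\Cent(G)(K)\) indexed by fixed finite generating sets of all the \(\mathfrak g_\alpha\), subject to commutator congruences modulo higher root subgroups; these force the family to come from \(K\)-linear maps \(U_\alpha\) commuting with all brackets, and a case analysis (linear-, orthogonal-, symplectic-like) shows each \(U_\alpha\) is multiplication by a single scalar. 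Your approach yields a more transparent intermediate object, but it depends on having an \(\mathsf A_2\) of equal-length roots; the paper's construction is uniform in the root system and, crucially, survives rank \(2\).

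That is where the genuine gap sits, and it is exactly where you flagged it: for \(\Phi\) of type \(\mathsf C_2 = \mathsf B_2\) or \(\mathsf{BC}_2\) your construction does not start. \(\mathsf C_2\) contains no \(\mathsf A_2\)-subsystem, Theorem \ref{diophantine} gives no Diophantine individual root subgroup there, and no single commutator of root elements realizes the product of two elements of the same root space, so there is no candidate multiplication on any \(V_\alpha\). These cases are unavoidable (e.g.\ \(\up 2 {\mathsf A_{n, 2}^{(d)}}\), \(\mathsf C_{n, 2}^{(d)}\), \(\up 1 {\mathsf D_{n, 2}^{(d)}}\), \(\up 2 {\mathsf E_{6, 2}^{16'}}\), \(\mathsf E_{7, 2}^{31}\), \(\mathsf E_{8, 2}^{66}\)) and cannot be reduced to higher rank. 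The paper handles them with the same centroid formalism plus a concrete computation with form parameters: from \(U_{\mathrm e_1 - \mathrm e_2}(r)\,u = r\,U_{2\mathrm e_2}(u) = U_{\mathrm e_1 + \mathrm e_2}(r u)\) one pins \(U_{\mathrm e_1 - \mathrm e_2}(1)\) inside \(\Cent_R(\Lambda\iota^{-1}) \cap \Lambda\iota^{-1} = K\), using the explicit shape of the form parameter \(\Lambda\) and an invertible \(\iota \in \Lambda\). An argument of this kind must be supplied for your proof to close; ``carry the construction out modulo the center and verify case by case'' names the problem rather than solving it. Smaller points that also need attention: the unit and associativity of \(R\) after the Weyl-element identifications \(V_\beta \cong V_\alpha\) (and their mutual compatibility) require verification, and in type \(\up 2 {\mathsf A}\) the Diophantineness of the involution cutting \(K\) out of \(K'\) is asserted rather than proved.
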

\begin{proof}
	We assume that the type of
	\( \Phi \) is neither
	\( \mathsf G_2 \) nor
	\( \mathsf{BC}_\ell \) because these cases easily reduce to
	\( \mathsf A_2 \) and
	\( \mathsf C_\ell \) respectively. Fix some finite
	\( K \)-module generating sets
	\( X_\alpha \subseteq \mathfrak g_\alpha \) for all roots
	\( \alpha \) and let
	\(
		Y_\alpha
		=
		X_\alpha
		\cup
		\bigcup_{\beta, \alpha - \beta \in \Phi}
		[X_\beta, X_{\alpha - \beta}]
	\).
	
	Below we use that up to canonical isomorphisms the groups
	\( \mathfrak g_\alpha \) are e-interpretable in
	\( H \) by theorem
	\ref{diophantine} as Diophantine subgroups
	\( U_\alpha(K) \leq H \) or as factor-groups
	\( \Cent(H)\, U_\alpha(K) / \Cent(H) \). Under these e-interpretations all commutator maps
	\(
		\mathfrak g_\alpha \times \mathfrak g_\beta
		\to
		\mathfrak g_{\alpha + \beta}
	\) with
	\( \alpha, \beta, \alpha + \beta \in \Phi \) have Diophantine graphs.

	Let
	\( \widetilde K \) be the set of families
	\[
		(
			k_\alpha
			\colon
			Y_\alpha
			\to
			\mathfrak g_\alpha(K)
		)_{\alpha \in \Phi}
	\] such that
	\[
		[k_\alpha(y_\alpha), y_\beta]
		=
		[y_\alpha, k_\beta(y_\beta)]
		\in
		\mathfrak g_{\alpha + \beta}
	\]
	for
	\( y_\alpha \in Y_\alpha \),
	\( y_\beta \in Y_\beta \) and
	\[
		[k_\alpha(x_\alpha), x_\beta]
		=
		k_{\alpha + \beta}([x_\alpha, x_\beta])
		\in
		\mathfrak g_{\alpha + \beta}
	\]
	for
	\( x_\alpha \in X_\alpha \),
	\( x_\beta \in X_\beta \). Here
	\( \alpha \) and
	\( \beta \) are all roots such that
	\( \alpha + \beta \) is also a root. Clearly,
	\( \widetilde K \) is e-interpretable in
	\( H \) as a set.

	Every
	\( k \in K \) determines a corresponding element of
	\( \widetilde K \) given by
	\( y_\alpha \mapsto k y_\alpha \). Conversely, take
	\( (k_\alpha)_{\alpha \in \Phi} \in \widetilde K \). For every root
	\( \alpha \) there is long
	\( \beta \in \Phi \) such that
	\( \alpha + \beta \in \Phi \), so
	\(
		[k_\alpha(y_\alpha), y_\beta]
		=
		[y_\alpha, k_\beta(y_\beta)]
	\). Recall that the Lie bracket
	\(
		[{-}, {=}]
		\colon
		\mathfrak g_\alpha \times \mathfrak g_\beta
		\to
		\mathfrak g_{\alpha + \beta}
	\) is non-degenerate (by lemma
	\ref{c2-nondeg} if
	\( \alpha \) is short). For any linear relation
	\(
		\sum_{y_\alpha \in Y_\alpha}
			y_\alpha a_\alpha
		=
		0
	\) the equation implies that
	\(
		\sum_{y_\alpha \in Y_\alpha}
			k_\alpha(y_\alpha) a_\alpha
		=
		0
	\), so
	\( k_\alpha \) continues to a unique linear map
	\(
		k_\alpha
		\colon
		\mathfrak g_\alpha
		\to
		\mathfrak g_\alpha
	\).

	The equations on
	\( (k_\alpha)_\alpha \) imply that
	\(
		[k_\alpha(x), y]
		=
		k_{\alpha + \beta}([x, y])
		=
		[x, k_\beta(y)]
	\). We claim that such maps
	\( k_\alpha \) are necessarily scalar. It suffices to consider
	\( \Phi \) of rank
	\( 2 \) and to check the claim for only one root
	\( \alpha \) (by lemma
	\ref{c2-nondeg}). This is clear if one of root subspaces has rank
	\( 1 \). To prove the claim we can also assume that
	\( G \) splits and choose convenient parametrizations of the root subspaces
	\( \mathfrak g_\alpha \).

	\begin{itemize}
		
		\item
		In the case
		\( \up 1 {\mathsf A_{3 d - 1, 2}^{(d)}} \) we have
		\[
			k_{\mathrm e_1 - \mathrm e_2}(x)\, y
			=
			k_{\mathrm e_1 - \mathrm e_3}(x y)
			=
			x\, k_{\mathrm e_2 - \mathrm e_3}(y)
		\]
		for
		\( x, y \in \mat(d, K) \). Hence
		\(
			k_{\mathrm e_1 - \mathrm e_2}(x)
			=
			k_{\mathrm e_1 - \mathrm e_3}(x)
			=
			k_{\mathrm e_2 - \mathrm e_3}(x)
		\) (i.e. all maps
		\( k_\alpha \) coincide) and
		\(
			k_\alpha(x)
			=
			x\, k_\alpha(1)
			=
			k_\alpha(1)\, x
		\), so
		\( k_\alpha(1) \) lies in the center of
		\( \mat(d, K) \), and this center is precisely
		\( K \).
		
		\item
		The case
		\( \up 1 {\mathsf E_{6, 2}^{28}} \) is similar to the previous one. We have
		\[
			k_{\mathrm e_1 - \mathrm e_2}(x)\, y
			=
			k_{\mathrm e_1 - \mathrm e_3}(x y)
			=
			x\, k_{\mathrm e_2 - \mathrm e_3}(y)
		\]
		for
		\( x, y \in \mathrm Z(K) \), where
		\( \mathrm Z(K) \) is the split octonion algebra over
		\( K \). Hence all maps
		\( k_\alpha \) coincide and
		\(
			k_\alpha(x)
			=
			x\, k_\alpha(1)
			=
			k_\alpha(1)\, x
		\), so
		\( k_\alpha(1) \in K \).
		
		\item
		In the case
		\( \up 2 {\mathsf A_{4 d - 1, 2}^{(d)}} \) we have
		\[
			k_{\mathrm e_1 - \mathrm e_2}(x, y) \circ z
			=
			(x, y) \circ k_{2 \mathrm e_2}(z)
			=
			k_{\mathrm e_1 + \mathrm e_2}((x, y) \circ z)
		\]
		for
		\( x, y, z \in \mat(d, K) \), where
		\( (x, y) \circ z = (z x, y z) \). It follows that
		\(
			k_{\mathrm e_1 + \mathrm e_2}(x, y)
			=
			(x, y) \circ k_{\mathrm e_2}(z)
		\) and
		\(
			k_{\mathrm e_2}(z)
			=
			z k_{\mathrm e_2}(1)
			=
			k_{\mathrm e_2}(1) z
		\), so
		\( k_{\mathrm e_2}(1) \in K \).

		\item
		In the case
		\( \mathsf C_{2 d, 2}^{(d)} \) we have
		\[
			k_{\mathrm e_1 - \mathrm e_2}(x)\, y
			=
			x\, k_{2 \mathrm e_2}(y)
			=
			k_{\mathrm e_1 + \mathrm e_2}(x y)
		\]
		for
		\( x \in \mat(d, K) \) and symmetric matrix
		\( y = y^{\mathrm t} \in \mat(d, K) \). Then
		\(
			k_{2 \mathrm e_2}(y)
			=
			k_{\mathrm e_1 + \mathrm e_2}(y)
		\) and
		\(
			k_{\mathrm e_1 + \mathrm e_2}(x)
			=
			k_{\mathrm e_1 - \mathrm e_2}(x)
			=
			x\, k_{2 \mathrm e_2}(1)
		\). So
		\(
			k_{2 \mathrm e_2}(y)
			=
			k_{2 \mathrm e_2}(1)\, y
			=
			y\, k_{2 \mathrm e_2}(y)
		\), and it easily follows that
		\( k_{2 \mathrm e_2}(1) \in K \).
		
		\item
		In the last case
		\( \up 1 {\mathsf D_{2 d, 2}^{(d)}} \) we have
		\[
			k_{\mathrm e_1 - \mathrm e_2}(x)\, y
			=
			x\, k_{2 \mathrm e_2}(y)
			=
			k_{\mathrm e_1 + \mathrm e_2}(x y)
		\]
		for
		\( x \in \mat(d, K) \) and alternating
		\( y \in \mat(d, K) \) (i.e.
		\( y_{ii} = 0 \) and
		\( y_{ij} = - y_{ji} \)). Since
		\( d \geq 2 \) is even (actually, a power of
		\( 2 \)), there is an invertible alternating matrix
		\( J \in \mat(d, K) \), say, the matrix of the split symplectic form. As in the previous case,
		\(
			k_{2 \mathrm e_2}(y)
			=
			k_{\mathrm e_1 + \mathrm e_2}(y)
		\),
		\(
			k_{\mathrm e_1 - \mathrm e_2}(x)
			=
			x\, k_{2 \mathrm e_2}(J)\, J^{-1}
		\),
		\(
			k_{\mathrm e_1 + \mathrm e_2}(x)
			=
			x J^{-1}\, k_{\mathrm e_1 + \mathrm e_2}(J)
		\). Then
		\(
			k_{2 \mathrm e_2}(y)
			=
			k_{2 \mathrm e_2}(J)\, J^{-1} y
			=
			y J^{-1}\, k_{2 \mathrm e_2}(J)
		\) and
		\( k_{2 \mathrm e_2}(J) \) is alternating, so it easily follows that
		\( k_{2 \mathrm e_2}(J)\, J^{-1} \in K \).
	\end{itemize}

	It remains to define ring operations on
	\( \widetilde K \). The group operation is directly induced from
	\( \mathfrak g_\alpha \). The product of
	\( (k_\alpha)_\alpha \) and
	\( (l_\alpha)_\alpha \) is such a family
	\( (m_\alpha)_\alpha \) that
	\[
		[k_\alpha(x_\alpha), l_\beta(x_\beta)]
		=
		m_{\alpha + \beta}([x_\alpha, x_\beta])
	\]
	for
	\( x_\alpha \in X_\alpha \),
	\( x_\beta \in X_\beta \), and roots
	\( \alpha \),
	\( \beta \) such that
	\( \alpha + \beta \) is also a root. It suffices to impose this condition only for one pair
	\( (\alpha, \beta) \) with long
	\( \alpha \).
\end{proof}

\begin{theorem}
	\label{dioph-prbl}
	Let
	\( K \) be a commutative ring with an enumeration by
	\( \mathbb N \) or its finite subset such that the ring operations are computable. Let also
	\( G \) be an isotropic reductive group scheme over
	\( K \) such that the rank of
	\( \Phi \) is at least
	\( 2 \). Then the Diophantine problems for
	\( K \) and
	\( G(K) \) are equivalent, i.e. they reduce to each other by algorithms. Here the enumeration on
	\( G(K) \) is obtained from some closed embedding
	\( G \subseteq \mathbb A^n_K \), it is independent of embedding up to a computable permutation.
\end{theorem}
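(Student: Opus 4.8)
The plan is to derive the theorem formally from two e-interpretations and one effectiveness observation: \(G(K)\) is e-interpretable in \(K\) because \(G\) is a finitely presented affine group scheme (as recalled just before Theorem~\ref{e-interpret}), and \(K\) is e-interpretable in \(G(K)\) by Theorem~\ref{e-interpret}; moreover both e-interpretations, together with the data presenting \(G\) and the fixed choices made in Theorem~\ref{e-interpret} (the generating sets \(X_\alpha\), the factors of Weyl elements, the structure constants of the \(t_\alpha\), etc.), involve only finitely many elements of \(K\) whose codes are available, so every translation of formulae, parameters and enumeration codes below is carried out by an explicit algorithm. Throughout we use freely that Diophantine formulae are closed under conjunction, substitution of terms for variables, renaming of variables, and adjunction of existential variables, so each translated formula is again Diophantine.

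First I would spell out the e-interpretation of \(G(K)\) in \(K\). Fixing a closed embedding \(G \subseteq \mathbb A^n_K\), finite presentedness makes the ideal of \(G\) finitely generated, so \(G(K)\) is the Diophantine subset of \(K^n\) cut out by finitely many known polynomials; the multiplication, inversion and unit of \(G\) are morphisms of \(K\)-schemes, hence polynomial maps with known coefficients, so their graphs are Diophantine in \(K\) and the induced enumeration of \(G(K)\) (tuples of codes of the coordinates) has computable group operations. Two such embeddings are related by a polynomial change of coordinates in each direction, which gives the computable permutation of enumerations asserted in the statement. The reduction of \(\mathcal D(G(K))\) to \(\mathcal D(K)\) is now the evident syntactic rewriting: a Diophantine formula over the group language with parameters \(\vec c \in G(K)^m\) becomes a Diophantine formula over the ring language by replacing each group variable with a block of \(n\) ring variables constrained by the defining polynomials of \(G\), group equality with coordinatewise equality, the graph of multiplication with the corresponding polynomial system, and the constant \(1\) with the coordinates of the unit, while each parameter is passed on as its underlying tuple in \(K^n\). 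The two solution sets are simultaneously (non)empty and the whole transformation of codes is computable.

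For the reduction of \(\mathcal D(K)\) to \(\mathcal D(G(K))\) I would use Theorem~\ref{e-interpret}: \(K\) is realised as \(\widetilde K = X/E\) with \(X \subseteq G(K)^p\) Diophantine, \(E\) a Diophantine equivalence relation, the graphs of \(+\) and \(\times\) Diophantine, and \(0,1\) Diophantine-definable, together with the explicit surjection \(\pi \colon \widetilde K \to K\). A Diophantine formula \(\varphi(\vec x, \vec a)\) over the ring language is translated by replacing each ring variable with a block of \(p\) group variables constrained to lie in \(X\), ring equality with \(E\), the graphs of \(+\) and \(\times\) with their Diophantine lifts, and \(0,1\) with fresh existential variables lying in the Diophantine sets that represent them. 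The parameters are the one place needing the effectiveness remark: \(a_i \in K\) is sent to the explicit preimage \(\widetilde{a_i} \in \widetilde K\) from the proof of Theorem~\ref{e-interpret}, whose components are \(t_\alpha(a_i y_\alpha)\) for the fixed generators \(y_\alpha \in Y_\alpha\); since \(k \mapsto t_\alpha(k y_\alpha)\) and the embedding \(G_\alpha \subseteq \mathbb A^n_K\) are polynomial with known coefficients, the tuple of codes of \(\widetilde{a_i}\) is computed from the code of \(a_i\) using the computable ring operations of \(K\). Surjectivity of \(\pi\) and the fact that the lifted relations and graphs are the \(\pi\)-preimages of the corresponding relations and graphs on \(K\) give that the translated formula has a solution in \(G(K)\) if and only if \(\varphi(\vec x, \vec a)\) has one in \(K\); this is again a computable transformation.

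The two reductions together prove the equivalence. I do not expect any mathematical obstacle: the substance is entirely in Theorems~\ref{diophantine} and~\ref{e-interpret}, and this theorem is a formal consequence. The only thing to be careful about is the bookkeeping of effectivity — checking that the polynomials describing \(G\) and its group law, the defining formulae of the two e-interpretations, and all auxiliary constants can be exhibited from the given data, and that the enumeration of \(G(K)\) arising from an embedding \(G \subseteq \mathbb A^n_K\) is, as stated, canonical up to a computable permutation. This is routine, but it is the part that must be written out with some care.
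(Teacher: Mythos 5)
Your proposal follows exactly the paper's argument: reduce \(\mathcal D(G(K))\) to \(\mathcal D(K)\) via the e-interpretation of \(G(K)\) in \(K\) coming from the closed embedding \(G \subseteq \mathbb A^n_K\), and reduce \(\mathcal D(K)\) to \(\mathcal D(G(K))\) by invoking Theorem~\ref{e-interpret}, with the effectivity of both translations checked by inspection. The paper's proof is just a terser version of the same two reductions, so your write-up is correct and takes the same route.
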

\begin{proof}
	Clearly, the group operations on
	\( G(K) \) are also computable. Every Diophantine subset of
	\( G(K)^k \) can be considered as a Diophantine subset of
	\( K^{k n} \) using the e-interpretation of
	\( G(K) \) in
	\( K \) and this transformation is clearly computable in terms of defining formulae. In the other direction apply theorem
	\ref{e-interpret}.
\end{proof}

\bibliographystyle{plain}
\bibliography{references}

\end{document}